\theoremstyle{plain}
\newtheorem{prop}[equation]{Proposition}
\newtheorem{cor}[equation]{Corollary}
\newtheorem{lemma}[equation]{Lemma}
\newtheorem{defn}[equation]{Definition}
\numberwithin{equation}{section}
\newcommand{\Z}{\mathbb Z}
\newcommand{\G}{\mathfrak G}
\newcommand{\A}{\mathbb A}
\def\Hom{{\mathrm{Hom}}}
\def\Aut{{\mathrm{Aut}}}
\def\G{{\rm G}}
\def\SL{{\mathrm{SL}}}
\def\GSp{{\mathrm{GSp}}}
\def\PGSp{\mathrm{PGSp}}
\def\PGSO{\mathrm{PGSO}}
\def\Sp{{\mathrm{Sp}}}
\def\Mp{{\mathrm{Mp}}}
\def\Spin{\mathrm{Spin}}
\def\GU{{\mathrm {GU}}}
\def\SU{{\mathrm {SU}}}
\def\U{{\mathrm U}}
\def\GO{{\mathrm GO}}
\def\GL{{\mathrm{GL}}}
\def\PGL{{\mathrm{PGL}}
\def\GSO{\mathrm GSO}}
\def\SO{\mathrm{SO}}
\def\Spin{{\mathrm{Spin}}}
\def\Ind{{\mathrm{Ind}}}
\def\Irr{{\mathrm{Irr}}}
\def\Mp{\mathrm{Mp}}
\def\O{{\mathrm O}}
\def\Res{{\mathrm{Res}}}
\def\der{{\mathrm{der}}}
\def\Ker{{\mathrm{Ker}}}
\def\sim{{\mathrm{sim}}}
\def\ind{{\mathrm{ind}}}
\def\A{{\mathbb A}}
\def\Z{{\mathbb Z}}
\def\G{{\mathbb G}}
\title[Similitude exceptional theta correspondences]{Similitude exceptional theta correspondences}
\author{Petar Baki\'c, Wee Teck Gan and Gordan Savin}
\address{P. B.: Department of Mathematics, University of Utah, Salt Lake City, UT}\email{bakic@math.utah.edu}
\address{W.T.G.:   Department of Mathematics, National University of Singapore, 10 Lower Kent Ridge Road
Singapore 119076} \email{matgwt@nus.edu.sg}
\address{G. S.: Department of Mathematics, University of Utah, Salt Lake City, UT}\email{savin@math.utah.edu}
 \subjclass[2000]{11S90, 17A75,  17C40}
\dedicatory{In honor of Marko Tadi\'c \\ from whom we have learned so much}
\begin{document} 

\maketitle

\section{\bf Introduction}  \label{S:intro}

Let $F$ be a local field and suppose $(G, H)$ is a reductive dual pair in simple linear algebraic group $\mathcal{E}$ over $F$. 
Hence, $G$ and $H$ are subgroups of $\mathcal{E}$ which are mutual centralizers of each other.
We thus have a homomorphism of algebraic groups
\[  i: G \times H \longrightarrow \mathcal{E}. \]
If $\Omega$ is a minimal representation of $\mathcal{E}(F)$, we may pull back $\Omega$ to the product group $G(F) \times H(F)$ and study the resulting branching problem: this is the usual set up of theta correspondence.

\vskip 5pt

However, the map $i$ is frequently not injective. Typically, one has  (multiplicative type) central subgroups
\[  Z \hookrightarrow G \quad \text{and} \quad  Z  \hookrightarrow H \]
which are identified under $i$. More precisely, one has 
\[ \mathrm{Ker}(i) =   Z^{\nabla} = \{ (z, z^{-1}): z \in Z \},\]
 so that one has an injecion
\[  (G \times H)/  Z^{\nabla} \hookrightarrow \mathcal{E}. \]
This already occurs in the setting of the classical theta correspondence for the symplectic-orthogonal dual pair 
\[  i: \O(V) \times \Sp(W)  \longrightarrow \Sp(V \otimes W) \]
whose kernel is the diagonally embedded $Z= \mu_2$. 
\vskip 5pt

In this short note,  we explain how the non-injectivity of $i$ can be exploited to extend the theta correspondence for $G(F) \times H(F)$ to the setting of similitude groups associated to $G$ and $H$. Indeed, as we shall see, this non-injectivity is the reason for the existence of such a theory.
\vskip 5pt

Here is a brief summary of the content of the paper.  In \S 2, we associate to a dual pair $G \times H \longrightarrow \mathcal{E}$ as above, and some additional data, a similitude dual pair $\tilde{G} \times \tilde{H}$.  After introducing the associated similitude version of theta correspondence in \S 3 and demonstrating some simple properties of this similitude theta correspondence in \S 4, we show in \S 5 that the Howe duality theorem holds for $G \times H$ if and only if the analogous theorem holds for $\tilde{G} \times \tilde{H}$; this is Proposition \ref{P:Howe} and should be considered the main result of this paper. The rather long \S 6 is devoted to 
highlighting a few families of examples in the context of exceptional groups. In \S 7, we discuss how the useful notion of seesaw duality can be extended to the similitude setting, using the families of examples discussed in \S 6 as illustration. Finally, \S 8 discusses the similitude theta correspondences in the global setting and \S 9 collects together some basic results in Clifford theory that are used in the paper. 
\vskip 5pt

\section{\bf Similitude Groups} \label{S:simi group}
Let us first introduce the notion and construction of the relevant similitude groups. We will continue to work in the context of the introduction. 
\vskip 5pt

\subsection{\bf Initial data} \label{SS:ID}
The initial data needed for the construction of similitude groups is described in the following hypotheses: 

\vskip 5pt

\begin{itemize}
\item[(a)]  the commutative group $Z$ can be embedded into an induced torus:
\[    j: Z \hookrightarrow  T: = \mathrm{Res}_{K/F} \mathbb{G}_m \]
where $K$ is an \'etale $F$-algebra of finite rank. 
\vskip 5pt

\item[(b)]  One has
\[  S:= T / j(Z) \hookrightarrow \mathrm{Res}_{E/F}(\mathbb{G}_m), \]
for some $E/F$. 
\end{itemize}
 We shall fix these data in what follows.

\vskip 5pt

 \subsection{\bf Examples} \label{SS:Examples}
 Let us give some examples:

\begin{itemize}
\item[(1)] when $Z = \prod_{i=1}^r \mu_{n_i}$, one takes embeddings $\mu_{n_i} \hookrightarrow \G_m$, so that one has
\[  \begin{CD} 
1 @>>> Z @>>>T =  \prod_{i=1}^r \G_m @>>> T = \prod_{i=1}^r \G_m @>>> 1\end{CD} \]
where the map $T \rightarrow T$ is $z \mapsto z^{n_i}$ on the $i$-th coordinate. 
\vskip 5pt

\item[(2)]  when 
\[  Z = \mathrm{Res}^1_{K/F}(\G_m) = \mathrm{Ker}(N_{K/F} :  \mathrm{Res}_{K/F}(\G_m) \rightarrow \G_m), \]
 one takes $T = \mathrm{Res}_{K/F} \G_m$ and one has
 \[  \begin{CD}
 1 @>>> Z @>>> T @>N_{K/F}>> \G_m @>>> 1 \end{CD} \]
 \vskip 5pt
 
 \item[(3)]  when
 \[  Z = \mathrm{Res}^1_{K/F} (\mu_n)  = \mathrm{Ker}(N_{K/F}: \mathrm{Res}_{K/F}(\mu_n) \rightarrow  \G_m), \]
 one takes $T = \mathrm{Res}_{K/F} \G_m$ and one has
 \[  \begin{CD}
 1 @>>> Z @>>>  T @>N_{K/F} \times [n]>>  \G_m \times  \mathrm{Res}_{K/F}(\G_m).    
 \end{CD} \]
 Note that in this last example, the sequence above is not exact on the right, as the image of the last arrow is a codimension 1 torus. 
 Consider however the special case when 
 \[  |n - [K:F]| = 1. \]
 In this case, it turns out that the composite
 \[  \begin{CD}
  T / Z @>N_{K/F} \times [n]>> \G_m \times  \mathrm{Res}_{K/F}(\G_m) @>(t,x) \mapsto t x^{-1}>>  \mathrm{Res}_{K/F}(\G_m)  = T
  \end{CD} \]
  is an isomorphism of algebraic tori. Indeed, if $x \in  \mathrm{Res}_{K/F}(\G_m)$ lies in the kernel of the above composite map, so that 
  \[  N_{K/F}(x)  =  x^n, \]
  then on taking norms on both sides, we conclude  that
  \[   N_{K/F}(x)^{[K:F]}  = N_{K/F}(x)^n,  \]
  and hence $x^n = N_{K/F}(x) = 1$, i.e. $x \in \mathrm{Res}_{K/F}^1(\mu_n)$. In the examples we shall consider later,  the rather peculiar condition $ |n - [K:F]| = 1$ will turn out to be satisfied.
  \end{itemize}
\vskip 5pt

\subsection{\bf Similitude groups}
Now we make the following definition:
\vskip 5pt

\begin{defn}
The similitude group $\tilde{G}$ associated to $G$ and the data in (a) and (b) above  is:
\[  \tilde{G} := (G \times  T)/ Z^{\nabla}. \]
The associated similitude homomorphism is the map
\[  \sim_G: \tilde{G}   \twoheadrightarrow T/ j(Z) \hookrightarrow \mathrm{Res}_{E/F} \G_m. \]
given by the second projection and (b). 
\end{defn}
\vskip 5pt

Then observe that one has:
\[  \begin{CD}
 1 @>>> G @>>> \tilde{G} @>\sim_G>>  \mathrm{Res}_{E/F} \G_m   \end{CD} \]
 and
 \[  \begin{CD}
 1 @>>> T @>>> \tilde{G} @>>> G/Z @>>> 1 \end{CD} \]
Likewise, we have the similitude group and similitude character 
\[  \sim_H : \tilde{H}  := (H \times T)/ Z^{\nabla} \longrightarrow \mathrm{Res}_{E/F}(\G_m)  \]
associated to $H$ and the data in (a) and (b), as well as analogs  of the two exact sequences above. 
\vskip 5pt

\subsection{\bf Surjectivity of similitude factor}
Recalling that $S = T/ j(Z)$, we have a commutative diagram of short exact sequences of algebraic groups:
\[ \begin{CD}
 1 @>>> Z @>j>> T @>>> S @>>> 1 \\
 @. @VVV  @VVV   @|   \\
 1 @>>> G @>>> \tilde{G} @>\sim_G>>  S @>>> 1.   \end{CD} \]
On taking $F$-points, we obtain the following commutative diagram with exact rows:
\[  \begin{CD}
T(F) @>>> S(F)  @>\alpha>> H^1(F, Z) \\
@VVV @| @VV{\beta}V \\
\tilde{G}(F) @>\sim_G>> S(F)@>>> H^1(F, G)  
\end{CD} \]
From this, we see that the similitude map $\sim_G : \tilde{G}(F) \longrightarrow S(F)$ is surjective if and only if the map $S(F) \longrightarrow H^1(F, G)$ is $0$.
Since the latter map factors as:
\[  \begin{CD} 
S(F) @>\alpha>> H^1(F, Z) @>\beta>> H^1(F, G), 
\end{CD} \]
 we obtain:
 \vskip 5pt
 
 \begin{prop} \label{P:surj}
 The map $\sim_G : \tilde{G}(F) \longrightarrow S(F)$ is surjective if one of  the following conditions holds:
 
 \vskip 5pt
 \begin{itemize}
 \item  $\alpha: S(F) \longrightarrow H^1(F, Z)$ is $0$;
 \item $\beta: H^1(F,Z) \longrightarrow H^1(F, G)$ is $0$.
 \end{itemize}
 \end{prop}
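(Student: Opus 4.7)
The plan is essentially to finish the diagram chase that the authors have already set up just before the statement. The key observation is that the long exact sequence in Galois cohomology, applied to the bottom row $1 \to G \to \tilde{G} \to S \to 1$, gives exactness at $S(F)$:
\[  \tilde{G}(F) \xrightarrow{\sim_G} S(F) \xrightarrow{\delta} H^1(F,G), \]
so that $\sim_G$ is surjective on $F$-points if and only if the connecting map $\delta: S(F) \to H^1(F,G)$ is zero. Thus the proposition reduces to showing that, under either of the two hypotheses, $\delta = 0$.

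Next, I would invoke functoriality of the long exact sequence in non-abelian Galois cohomology applied to the commutative diagram of short exact sequences
\[  \begin{CD}
 1 @>>> Z @>j>> T @>>> S @>>> 1 \\
 @. @VVV  @VVV   @|   \\
 1 @>>> G @>>> \tilde{G} @>\sim_G>>  S @>>> 1.
\end{CD} \]
This shows that the connecting map $\delta$ for the bottom row coincides with the composition of the connecting map $\alpha: S(F) \to H^1(F,Z)$ for the top row with the induced map $\beta: H^1(F,Z) \to H^1(F,G)$; that is, $\delta = \beta \circ \alpha$. This is exactly the factorization displayed immediately above the statement of the proposition.

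From $\delta = \beta \circ \alpha$, it is immediate that if either $\alpha = 0$ or $\beta = 0$, then $\delta = 0$, which by the first step gives surjectivity of $\sim_G: \tilde{G}(F) \to S(F)$.

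There is essentially no obstacle here beyond being careful with the nonabelian cohomology formalism, since $G$ need not be abelian; fortunately $Z$ is abelian (being a central subgroup of multiplicative type) and the naturality of the connecting map takes values in the pointed set $H^1(F,G)$, where "factoring through $\beta \circ \alpha$" and "being zero" are unambiguous statements about pointed sets. No further computation is needed.
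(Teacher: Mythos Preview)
Your proposal is correct and follows exactly the argument the paper gives: the authors have already set up the commutative diagram with exact rows, noted that $\sim_G$ is surjective iff the map $S(F)\to H^1(F,G)$ is zero, and factored this map as $\beta\circ\alpha$; the proposition is then immediate. There is no separate proof in the paper beyond this diagram chase, so your write-up is essentially an explicit version of what is already on the page.
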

 Observe that the first condition in the proposition depends only on the initial data (a) and (b) in \S \ref{SS:ID}, whereas the second condition depends only on the pair $Z \subset G$.

\vskip 5pt
\subsection{\bf The group $\tilde{J}^{\sim}$}
Going back to dual pairs, let us set
\[   \tilde{J}^{\sim} = (\tilde{G} \times \tilde{H})^{\sim} = \{ (g,h) \in \tilde{G} \times \tilde{H}: \sim_G(g) \cdot \sim_H(h) =1 \}.\]
Note that one has exact sequences:
\[ \begin{CD}
1 @>>> G(F) @>>> \tilde{J}^{\sim}(F)  @>>> \tilde{H}(F)   \end{CD} \]
and
\[  \begin{CD}
 1@>>>  H(F) @>>> \tilde{J}^{\sim}(F) @>>> \tilde{G}(F) \end{CD} \]
One denotes the image of the last arrow in each sequence above by $\tilde{H}(F)^+$ and $\tilde{G}(F)^+$ respectively. Then one has the following containments with finite index:
\[  T(F) \cdot G(F) \subset \tilde{G}(F)^+ \quad \text{and} \quad   T(F)\cdot H(F)  \subset \tilde{H}(F)^+. \]
Observe moreover that 
\[  \tilde{G}(F)^+ = \tilde{G}(F)\quad  \text{if and only if} \quad  \sim_G(\tilde{G}(F))\subseteq \sim_H(\tilde{H}(F)) \]
and
\[  \tilde{H}(F)^+ = \tilde{H}(F) \quad \text{if and only if} \quad  \sim_H(\tilde{H}(F))\subseteq \sim_G(\tilde{G}(F)). \]
\vskip 5pt

\noindent In particular, $\tilde{G}(F)^+ = \tilde{G}(F)$ if $\sim_H: \tilde{H}(F) \longrightarrow S(F)$ is surjective. 
\vskip 5pt

\begin{defn}
We shall call the pair of groups $(\tilde{G}(F)^+, \tilde{H}(F)^+)$ constructed above a similitude dual pair.
\end{defn}

\vskip 5pt

 \vskip 5pt

\section{\bf Similitude Theta Correspondence}  \label{S:simi theta}
We now consider the theory of theta correspondence. 
Suppose that $\Omega$ is a minimal representation of $\mathcal{E}(F)$ \cite{GS}.
If $\pi \in \Irr (G(F))$, then the big theta lift of $\pi$ to $H(F)$ is by definition
\[  \Theta(\pi) = (\Omega \otimes \pi^{\vee})_{G(F)}, \]
which is a smooth representation of $H(F)$. Likewise, one has a smooth $G(F)$-representation $\Theta(\sigma)$ for any $\sigma \in \Irr (H(F))$.
\vskip 5pt

\begin{defn}
 We say that the dual pair $(G,H)$ satisfies the Howe duality property, or the Howe duality theorem holds for $(G,H)$, if $\Theta(\pi)$ is of finite length with a unique irreducible quotient (if nonzero) for any $\pi \in \Irr(G(F))$ and likewise for $\Theta(\sigma)$ for any $\sigma \in \Irr (H(F))$. 
 \end{defn}
 \vskip 5pt
 
 We would now like to extend the theta correspondence for $(G(F),H(F))$ to the setting of the similitude dual pair $(\tilde{G}(F)^+ , \tilde{H}(F)^+)$. For this, 
 we observe  that 
 \[  T^{\nabla}=\{ (t, t^{-1}): t \in T\} \subset \tilde{J}^{\sim} \]
 is a central subgroup. 
 Setting
\[  J^{\sim} = \tilde{J}^{\sim} /  T^{\nabla} = \left(  (\tilde{G} \times \tilde{H})^{\sim} \right) /  T^{\nabla},  \]
one sees that the natural inclusion gives an isomorphism
\[     (G \times H)/ Z^{\nabla} \simeq J^{\sim} = \left(  (\tilde{G} \times \tilde{H})^{\sim} \right) /  T^{\nabla}. \]
In particular, one has a natural homomorphism
\[  \iota: \tilde{J}^{\sim} \twoheadrightarrow J^{\sim} \hookrightarrow \mathcal{E}. \]
On taking $F$-points and noting that $T$ is an induced torus, so that   $H^1(F, T)$ is trivial, we see that
\[   ((G \times H)/ Z^{\nabla})(F) = J^{\sim}(F) =  (\tilde{G}(F) \times \tilde{H}(F))^{\sim} /  T^{\nabla}(F), \]
so that one has
\[  \iota: \tilde{J}^{\sim}(F)  \twoheadrightarrow    J^{\sim}(F)  \hookrightarrow \mathcal{E}(F). \]
In particular,  any representation of $\mathcal{E}(F)$ can be  pulled back via $\iota$ to
\[  \tilde{J}^{\sim}(F) = (\tilde{G}(F) \times \tilde{H}(F))^{\sim}. \]

 \vskip 5pt
 
  Now one has the following short exact sequences:
 \[ \begin{CD}
1 @>>> G(F) @>>> \tilde{J}^{\sim}(F)  @>>> \tilde{H}(F)^+ @>>> 1   \end{CD} \]
and
\[  \begin{CD}
 1@>>>  H(F) @>>> \tilde{J}^{\sim}(F) @>>> \tilde{G}(F)^+ @>>>1.  \end{CD} \]
\vskip 5pt

\noindent Given an irreducible representation $\tilde{\pi}$ of $\tilde{G}(F)^+$, we may regard $\tilde{\pi}^{\vee}$ as a representation of $\tilde{J}^{\sim}(F)$ via the natural surjection $\tilde{J}^{\sim}(F) \rightarrow \tilde{G}(F)^+$ given in the second short exact sequence above. Noting that $G(F)$ is a normal subgroup of $\tilde{J}^{\sim}(F)$ by the first short exact sequence above, we set
 \[  \Theta(\tilde{\pi}) := ( \Omega \otimes \tilde{\pi}^{\vee})_{G(F)}, \]
  so that $\Theta(\tilde{\pi})$ is a smooth representation of $\tilde{H}(F)^+$.
   Likewise for $\tilde{\sigma} \in \Irr (\tilde{H}(F)^+)$, one has
 \[  \Theta(\tilde{\sigma} )=  ( \Omega \otimes \tilde{\sigma}^{\vee})_{H(F)}  \]
 which is a smooth representation of $\tilde{G}(F)^+$.
 \vskip 10pt

 One can rephrase the above definitions slightly by introducing the similitude minimal representation. Set
 \[  \tilde{\Omega} = {\ind}_{\tilde{J}^{\sim}(F)}^{\tilde{J}(F)^+} \Omega \]
 where 
 \[  \tilde{J} = \tilde{G} \times \tilde{H} \]
 and
 \[  \tilde{J}(F)^+ = \tilde{G}(F)^+ \times \tilde{H}(F)^+.\]
  Then  $\tilde{\Omega}$ is a representation of the similitude dual pair $\tilde{G}(F)^+ \times \tilde{H}(F)^+$ and we can defined $\Theta(\tilde{\pi})$ and $\Theta(\tilde{\sigma})$ in the usual way.
 We leave it to the reader to verify that the two definitions are the same.
 \vskip 5pt
   
 As in the ``isometry" case, we can define the notion of the Howe duality property holding for $(\tilde{G}(F)^+, \tilde{H}(F)^+)$: 
 \vskip 5pt
 
 \begin{defn}
 We say that the similitude dual pair $(\tilde{G}(F)^+, \tilde{H}(F)^+)$ satisfies the Howe duality property if 
 $\Theta(\tilde{\pi})$ has finite length and unique irreducible quotient (if nonzero) for any $\tilde{\pi} \in \Irr (\tilde{G}(F)^+)$, and likewise for $\Theta(\tilde{\sigma})$ with $\tilde{\sigma} \in \Irr (\tilde{H}(F)^+)$.
\end{defn}

\vskip 5pt

\section{\bf Central Characters}  \label{S:central}
Let us record some simple properties of the similitude theta correspondence.
\vskip 5pt

 \begin{lemma}[Central characters]  \label{L:central}
 Suppose that $\pi \in \Irr( \tilde{G}(F)^+)$ has  $T(F)$-central character $\chi$. Then the smooth $\tilde{H}(F)^+$-module $\Theta(\tilde{\pi})$ 
has $T(F)$-central character $\chi$. 
 \end{lemma}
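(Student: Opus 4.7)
The plan is to exploit two facts: $T$ sits centrally in both $\tilde{G}$ and $\tilde{H}$ (so $T(F)$-central characters make sense on both sides), and the two copies of $T(F)$ are linked inside $\tilde{J}^{\sim}(F)$ through $T^{\nabla}$, which acts trivially on $\Omega$ since the pullback $\iota$ factors through $J^{\sim} = \tilde{J}^{\sim}/T^{\nabla}$.

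Fix $t \in T(F)$, regarded as an element of $\tilde{H}(F)$ via $T \hookrightarrow \tilde{H}$. To compute the action of $t$ on $\Theta(\tilde\pi) = (\Omega \otimes \tilde\pi^\vee)_{G(F)}$, I would lift $t$ to $\tilde{J}^{\sim}(F)$. The natural candidate is $(t^{-1}, t) \in \tilde{G}(F) \times \tilde{H}(F)$: its similitude factors satisfy $\sim_G(t^{-1})\cdot \sim_H(t) = 1$ in $S(F)$ by direct cancellation, so it lies in $\tilde{J}^{\sim}(F)$ and projects to $t$ under $\tilde{J}^{\sim}(F) \twoheadrightarrow \tilde{H}(F)^+$; in particular this shows $T(F) \subset \tilde{H}(F)^+$.

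The key identification is that $(t^{-1}, t) = (s, s^{-1})$ with $s = t^{-1}$, so it belongs to $T^{\nabla}(F)$ and consequently acts trivially on $\Omega$. Under the other projection $\tilde{J}^{\sim}(F) \twoheadrightarrow \tilde{G}(F)^+$, the same element maps to $t^{-1}$, so it acts on $\tilde\pi^\vee$ by the scalar $\chi^{-1}(t^{-1}) = \chi(t)$. Combining these, $(t^{-1}, t)$ acts on $\Omega \otimes \tilde\pi^\vee$ by the scalar $\chi(t)$, and this descends to an action of $t$ on the $G(F)$-coinvariants $\Theta(\tilde\pi)$ by the same scalar; the symmetric argument applies to $\tilde\sigma$. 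Hence the $T(F)$-central character of $\Theta(\tilde\pi)$ is $\chi$, as claimed.

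I do not anticipate any real obstacle: the whole argument reduces to the diagonal identity $(t^{-1}, t) \in T^{\nabla}$, which is precisely the mechanism by which the similitude construction transports $T(F)$-central characters between the two members of the similitude dual pair.
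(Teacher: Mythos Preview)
Your proof is correct and follows essentially the same approach as the paper's proof. The paper's argument is a one-line observation that $T^{\nabla}(F)$ acts trivially on $\Omega$, so the $T(F)$-action through $\tilde{H}(F)^+$ on $\Theta(\tilde{\pi})$ coincides with the $T(F)$-action through $\tilde{G}(F)^+$ on $\tilde{\pi}$; you have simply unpacked this by explicitly writing down the lift $(t^{-1},t)\in T^{\nabla}(F)$ and tracking the scalars.
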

\begin{proof}
This is because $ T^{\nabla}(F) \subset \tilde{J}^{\sim}(F)$  acts trivially on $\Omega$ and so $T(F) \subset \tilde{H}(F)^+$ acts on
\[  \Theta(\tilde{\pi}) = (\Omega \otimes \tilde{\pi}^{\vee})_{G(F)} \] 
in the same way as $T(F) \subset \tilde{G}(F)^+$ acts on $\tilde{\pi}$. 
\end{proof}

One can also decompose the $\tilde{J}(F)^+$-module  $\tilde{\Omega}$  according to characters of the central subgroup $T(F) \times T(F)/  T^{\nabla}(F)$.
Each character of $T(F) \times T(F) /  T^{\nabla}(F)$ is of the form $\chi \otimes \chi$, where $\chi$ is a character of $T(F)$. 
We can consider the $\chi$-isotypic quotient
$\tilde{\Omega}_{T,\chi}$ of $\tilde{\Omega}$.  which can be described as follows.
\vskip 5pt

One has an intermediate group
\[  \tilde{J}^{\sim}(F)  \subset \tilde{J}^{\sim}(F) \cdot (T(F) \times T(F)) \subset \tilde{J}(F)^+. \]
We may extend the $\tilde{J}^{\sim}(F)$-module $\Omega$ to the intermediate group $\tilde{J}^{\sim}(F) \cdot (T(F) \times T(F))$ by letting $T(F) \times T(F)$ acts by $\chi \otimes \chi$.
This extension is well-defined because $\chi \otimes \chi$ is trivial on the intersection
\[  \tilde{J}^{\sim}(F) \cap (T(F) \times T(F)) =  T^{\nabla}(F). \]
We denote this extended representation by $\Omega_{\chi}$
 Then one has
\[  \tilde{\Omega}_{T, \chi} \simeq  \ind_{\tilde{J}^{\sim}(F) \cdot (T(F) \times T(F))}^{\tilde{J}(F)^+}  \Omega_{\chi}. \]
It will sometimes be convenient to restrict to a fixed $T(F)$-central character $\chi$, in which case one will be working with $\tilde{\Omega}_{T,\chi}$.
\vskip 5pt

\section{\bf The Howe Duality Property}  \label{S:howe}
In this section, we shall show:
\vskip 5pt

\begin{prop} \label{P:Howe}
The Howe duality property holds for the  dual pair $(G(F), H(F))$ if and only if the Howe duality property holds for the similitude dual pair $(\tilde{G}(F)^+, \tilde{H}(F)^+)$.
\end{prop}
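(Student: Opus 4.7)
The plan is to reduce both implications to a Clifford-theoretic comparison, exploiting the fact that $G(F) \triangleleft \tilde{G}(F)^+$ and $H(F) \triangleleft \tilde{H}(F)^+$ are normal subgroups sharing a common abelian quotient
\[
A := \tilde{J}^{\sim}(F) / (G(F) \cdot H(F)) \cong \tilde{G}(F)^+/G(F) \cong \tilde{H}(F)^+/H(F).
\]
Because $\Omega$ is a $\tilde{J}^{\sim}(F)$-module, the big theta lift is $\tilde{J}^{\sim}$-equivariant: for $(\tilde{g}, \tilde{h}) \in \tilde{J}^{\sim}(F)$ and $\pi \in \Irr(G(F))$, one has a natural isomorphism $\Theta(\pi^{\tilde{g}}) \cong \Theta(\pi)^{\tilde{h}}$ of $H(F)$-modules. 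Hence $\Theta$ intertwines the two $A$-actions on $\Irr(G(F))$ and on $H(F)$-smooth modules, and preserves stabilizers.

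The computational core is the identity
\[
\Theta(\tilde{\pi})|_{H(F)} \cong m \cdot \bigoplus_{\pi' \in \mathcal{O}} \Theta(\pi'),
\]
valid for any $\tilde{\pi} \in \Irr(\tilde{G}(F)^+)$ with Clifford decomposition $\tilde{\pi}|_{G(F)} = m \cdot \bigoplus_{\pi' \in \mathcal{O}} \pi'$ over an $A$-orbit $\mathcal{O} \subset \Irr(G(F))$; this is immediate since $H(F)$ acts trivially on $\tilde{\pi}^{\vee}$, lying in the kernel of $\tilde{J}^{\sim}(F) \to \tilde{G}(F)^+$. For the forward direction, assume Howe for $(G,H)$. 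Each $\Theta(\pi')$ then has unique irreducible quotient $\theta(\pi')$, and equivariance forces $\{\theta(\pi') : \pi' \in \mathcal{O}\}$ to be an $A$-orbit $\mathcal{O}'$ with the same stabilizer. I would then apply Frobenius reciprocity,
\[
\Hom_{\tilde{H}(F)^+}(\Theta(\tilde{\pi}), \tilde{\sigma}) \cong \Hom_{\tilde{J}^{\sim}(F)}(\Omega, \tilde{\pi} \otimes \tilde{\sigma}),
\]
for $\tilde{\sigma} \in \Irr(\tilde{H}(F)^+)$, and evaluate the right-hand side first on $(G \times H)(F)$: Howe duality for $(G,H)$ makes $\Hom_{G(F) \times H(F)}(\Omega, \pi' \boxtimes \sigma')$ one-dimensional exactly when $\sigma' = \theta(\pi')$ and zero otherwise. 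Taking $A$-invariants then picks out a unique $\tilde{\sigma}$ lying over $\mathcal{O}'$ for which the Hom space is nonzero, and shows the space is one-dimensional there; this is the sought unique irreducible quotient of $\Theta(\tilde{\pi})$.

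The converse runs in parallel: given $\pi \in \Irr(G(F))$, use Clifford theory to pick $\tilde{\pi} \in \Irr(\tilde{G}(F)^+)$ whose restriction contains $\pi$; apply the similitude Howe duality to get a unique irreducible quotient $\tilde{\sigma}$ of $\Theta(\tilde{\pi})$; and read off from the displayed decomposition that $\Theta(\pi)$, a direct summand of $\Theta(\tilde{\pi})|_{H(F)}$, has finite length with a unique irreducible quotient (using the Frobenius computation above to pin down the dimension). The main obstacle is the $A$-invariants calculation in the forward direction: one must verify that the projective cocycles arising when extending $\pi$ and $\sigma = \theta(\pi)$ to their common stabilizer $A_\pi \subset A$ agree. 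Both cocycles should arise as the obstruction to linearizing the canonical $A_\pi$-action on the one-dimensional space $\Hom_{G(F) \times H(F)}(\Omega, \pi \boxtimes \sigma)$; this rigidification, coming from the single $\tilde{J}^{\sim}(F)$-module $\Omega$, is exactly what the abstract Clifford theory collected in \S 9 is set up to exploit.
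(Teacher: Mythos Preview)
Your core reduction is the same as the paper's: both rely on the identity
\[
\Theta(\tilde{\pi})|_{H(F)} \cong m \cdot \bigoplus_{\pi' \in \mathcal{O}} \Theta(\pi')
\]
(which is Lemma~\ref{L:bigtheta}) together with the cosocle compatibility of Lemma~\ref{L:element}(iv). Where you diverge is in the endgame for uniqueness of the irreducible quotient.

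You attempt to compute $\Hom_{\tilde{H}(F)^+}(\Theta(\tilde{\pi}),\tilde{\sigma})$ directly, by first taking $G(F)\times H(F)$-invariants and then $A$-invariants, which forces you into the cocycle-matching problem you flag at the end. Your heuristic for resolving it --- that the two projective cocycles on $A_\pi$ must cancel because $\Omega$ furnishes an honest linear $\tilde{J}^{\sim}(F)$-action on the one-dimensional space $\Hom_{G\times H}(\Omega,\pi\boxtimes\sigma)$ --- is correct in spirit and can be made rigorous, but it is not actually carried out in \S\ref{S:howe} or in the appendix; the Clifford theory in the appendix treats a single normal subgroup, not the compatibility of two extensions linked through a common module. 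So as written this step is a genuine gap, albeit a fillable one.

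The paper avoids the cocycle comparison entirely by a ping-pong argument. Having fixed an irreducible summand $\tilde{\sigma}\subset\theta(\tilde{\pi})$, one restricts $\tilde{\sigma}$ to $H(F)$ to get multiplicity $n\le m$, then applies the same decomposition identity in the reverse direction (from $\tilde{\sigma}$ back to $\tilde{G}$) to force $m\le n$, whence $\tilde{\sigma}=\theta(\tilde{\pi})$. The converse is handled by the same back-and-forth. This symmetry argument is shorter and needs nothing beyond the displayed identity and elementary length bookkeeping; your route trades that symmetry trick for a more structural but heavier Hom-space computation. Your converse sketch is also too thin: knowing $\theta(\tilde{\pi})$ is irreducible does not by itself force each $\theta(\pi_i)$ to be irreducible without a further counting step, and the paper again uses the ping-pong to close this.
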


\subsection{\bf Some lemmas}  \label{SS:lemmas}
We shall show this in a series of lemmas, beginning with the following observation:
\vskip 5pt

\begin{lemma} \label{L:element}
Let $V$ be a not-necessarily-smooth representation of $\tilde{G}(F)^+$ on which the central subgroup $T(F)$ acts by a character $\chi$. Then:
\vskip 5pt

(i)  A vector $v \in V$ a smooth with respect to $\tilde{G}(F)^+$ if and only if it is smooth with respect to $G(F)$. 
\vskip 5pt

(ii) If $V^{\infty}$ denotes the subspace of smooth vectors of $V$ (with respect to $\tilde{G}(F)^+$ or $G(F)$), then $V^{\infty}$ has finite length as a $\tilde{G}(F)^+$-module if and only if it has finite length as a $G(F)$-module.
\vskip 5pt

(iii) If $V^{\infty}$ is an irreducible smooth $\tilde{G}(F)^+$-module, then $V^{\infty}$ is semisimple as a $G(F)$-module. Indeed,
\[  V^{\infty}|_{G(F)} \simeq m \cdot \bigoplus_{i=1}^k V_i \]
where the $V_i$'s are   irreducible smooth $G(F)$-modules which are pairwise inequivalent. Moreover,  $\tilde{G}(F)^+$ permutes the isomorphism classes of the $V_i$'s transitively. 
\vskip 5pt

(iv) Suppose that $V= V^{\infty}$ is of finite length (as $\tilde{G}(F)^+$-module or $G(F)$-module). Let $\tilde{cosoc}(V)$ be the cosocle (or maximal semisimple quotient) of $V$ as a $\tilde{G}(F)^+$-module and likewise let $cosoc(V)$ be the cosocle of $V$ as a $G(F)$-module. Then the natural map 
\[ cosoc(V)   \longrightarrow   \tilde{cosoc}(V)    \]
is an isomorphism of $G(F)$-modules.
\end{lemma}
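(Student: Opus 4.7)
My overall plan is to derive all four parts from Clifford theory (as collected in \S9) applied to the normal-subgroup chain $G(F) \triangleleft T(F)G(F) \triangleleft \tilde{G}(F)^+$, where the first inclusion is central (with $T(F) \cap G(F) = Z(F)$) and the second has finite index (as noted in \S2). The topological input I rely on throughout is that $T(F)G(F)$ is open in $\tilde{G}(F)^+$: it is of finite index and is closed, being the kernel of the boundary map $\tilde{G}(F) \to H^1(F, Z)$ attached to the short exact sequence $1 \to Z \to T \times G \to \tilde{G} \to 1$.

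For (i), one direction is immediate by restriction. For the converse, if $v$ is fixed by an open compact $K \subset G(F)$, then $\ker(\chi) \cdot K$ is an open subgroup of $T(F)G(F)$ --- hence of $\tilde{G}(F)^+$ --- that fixes $v$, using that $\chi$ is a smooth character and $T(F)$ is central. For (ii), the implication ``$G(F)$-finite length $\Rightarrow \tilde{G}(F)^+$-finite length'' is automatic since any strict chain of $\tilde{G}(F)^+$-submodules is a strict chain of $G(F)$-submodules, and the converse follows layer-by-layer from (iii). For (iii), my plan is to apply Clifford's theorem to the finite-index normal subgroup $T(F)G(F) \subset \tilde{G}(F)^+$: the restriction of the irreducible $V^\infty$ decomposes as $m \bigoplus_j U_j$ with pairwise inequivalent irreducibles transitively permuted by $\tilde{G}(F)^+$. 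Since $T(F)$ is central, it acts on each $U_j$ by the same character $\chi$, so every $G(F)$-submodule of $U_j$ is automatically $T(F)G(F)$-stable; hence $V_j := U_j|_{G(F)}$ is irreducible, the $V_j$'s are pairwise inequivalent (otherwise the $U_j$'s would coincide), and $\tilde{G}(F)^+$ permutes them transitively.

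The main obstacle is (iv); I plan to establish $\mathrm{rad}_G(V) = \mathrm{rad}_{\tilde{G}(F)^+}(V)$. One inclusion is easy: by (iii), $\tilde{cosoc}(V)$ is $G(F)$-semisimple, so the surjection $V \twoheadrightarrow \tilde{cosoc}(V)$ factors through $cosoc(V)$, producing both the natural map $cosoc(V) \twoheadrightarrow \tilde{cosoc}(V)$ of the lemma and the containment $\mathrm{rad}_G(V) \subseteq \mathrm{rad}_{\tilde{G}(F)^+}(V)$. For the reverse inclusion I first observe that $\mathrm{rad}_G(V)$ is $\tilde{G}(F)^+$-stable, since conjugation by any $g \in \tilde{G}(F)^+$ permutes the maximal $G(F)$-submodules of $V$ and hence fixes their intersection, so $cosoc(V)$ inherits a $\tilde{G}(F)^+$-action. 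It then suffices to show that $cosoc(V)$ is $\tilde{G}(F)^+$-semisimple, for then it is a $\tilde{G}(F)^+$-semisimple quotient of $V$ and hence a quotient of $\tilde{cosoc}(V)$, giving $\mathrm{rad}_{\tilde{G}(F)^+}(V) \subseteq \mathrm{rad}_G(V)$ and forcing the natural surjection to be an isomorphism. To prove this semisimplicity I would decompose $cosoc(V)$ into $\tilde{G}(F)^+$-orbits of $G(F)$-isotypic components --- each orbit being the $\tilde{G}(F)^+$-induction of a single isotypic piece $V_i \otimes M_i$ from the stabilizer $\pi$ of the isoclass of $V_i$, which is of finite index since $\pi \supseteq T(F)G(F)$ --- and reduce, via Maschke-style averaging over the finite quotients $\tilde{G}(F)^+/\pi$ and $\pi/T(F)G(F)$, to the $T(F)G(F)$-semisimplicity of $V_i \otimes M_i$, which is immediate because $T(F)$ acts on $M_i$ through a character determined by $\chi$, making $V_i \otimes M_i$ a direct sum of copies of a single irreducible $T(F)G(F)$-module.
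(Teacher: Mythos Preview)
Your approach is correct and aligns with the paper's: both reduce everything to Clifford theory for the finite-index open normal subgroup $T(F)G(F) \subset \tilde{G}(F)^+$, combined with the observation that $G(F)$-submodules and $T(F)G(F)$-submodules coincide once $T(F)$ acts by a fixed character. Parts (i)--(iii) are handled essentially identically.

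For (iv), however, you take a more circuitous route than necessary. The paper's argument (Proposition~\ref{P:A1}(2),(3) in the Appendix) is simply this: since $cosoc(V)$ is $G(F)$-semisimple, hence $T(F)G(F)$-semisimple, a \emph{single} Maschke averaging over the finite group $\tilde{G}(F)^+/T(F)G(F)$ shows it is $\tilde{G}(F)^+$-semisimple; symmetrically, $\tilde{cosoc}(V)$ is $G(F)$-semisimple. Together with $\tilde{G}(F)^+$-stability of $\mathrm{rad}_G(V)$ (which you correctly note), this gives both inclusions at once. Your detour through the orbit decomposition of isotypic components and the intermediate stabilizer $\pi$ is valid but superfluous: in the end your step ``Maschke over $\tilde{G}(F)^+/\pi$'' is not literally a Maschke average (since $\pi$ need not be normal), and to justify it you would restrict the induced module back down to $T(F)G(F)$ and then average over $\tilde{G}(F)^+/T(F)G(F)$ anyway---which is exactly the one-step argument. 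So nothing is wrong, but the decomposition into orbits and the two-step averaging buy you nothing beyond what Proposition~\ref{P:A1}(2) already gives.
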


\begin{proof}
All these follows readily from the fact  that $T(F) \cdot G(F)$  is an open subgroup of finite index in $\tilde{G}(F)^+$ and the Clifford theory recounted in Appendix A below.
More precisely, 
\vskip 5pt
\begin{itemize}
\item (i) follows from the fact that an open compact subgroup of $G(F) \cdot T(F)$ is an open compact subgroup of $\tilde{G}(F)^+$;
\item (ii) follows from Proposition \ref{P:A1}(1); 
\item (iii) follows from Proposition \ref{P:A1}(2) and Proposition \ref{P:A2};
\item (iv) follows from Proposition \ref{P:A1}(3).
\end{itemize}
\end{proof}
\vskip 5pt



\vskip 5pt
Next we have:
\vskip 5pt

\begin{lemma}  \label{L:bigtheta}
 Let  $\tilde{\pi} \in \Irr (\tilde{G}(F)^+)$ and suppose (as in Lemma \ref{L:element}(iii)) that
\[  \tilde{\pi}|_{G(F)} \simeq m \cdot \bigoplus_{i=1}^k \pi_i \]
for some $\pi_i \in \Irr (G(F))$.  Then we have:
\begin{equation} \label{E:dualtheta}
  \Theta(\tilde{\pi})|_{H(F)}  =  m \cdot \bigoplus_i \Theta(\pi_i). \end{equation}
In particular, $\Theta(\tilde{\pi})$ is nonzero if and only if $\Theta(\pi_i) \ne 0$ for some (equivalently for all) $i$.
\end{lemma}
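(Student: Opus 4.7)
The plan is to directly compute $\Theta(\tilde{\pi})|_{H(F)} = (\Omega \otimes \tilde{\pi}^{\vee})_{G(F)}|_{H(F)}$ by carefully tracking the $H(F)$-action coming through $\tilde{J}^{\sim}(F)$, and then to use exactness of Jacquet modules together with the given decomposition of $\tilde{\pi}|_{G(F)}$.

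The first step is to identify the $H(F)$-action on the right-hand side. Recall that $\tilde{\pi}^{\vee}$ is regarded as a $\tilde{J}^{\sim}(F)$-representation via the second short exact sequence of \S 3, whose kernel is $H(F)$; so $H(F)$ acts trivially on $\tilde{\pi}^{\vee}$. Via the first short exact sequence $1 \to G(F) \to \tilde{J}^{\sim}(F) \to \tilde{H}(F)^+ \to 1$, we see that the preimage of $H(F) \subset \tilde{H}(F)^+$ in $\tilde{J}^{\sim}(F)$ is precisely $G(F) \times H(F)$ (the subgroup of pairs with trivial similitude), and the $H(F)$-action on $\Theta(\tilde{\pi})$ is obtained by lifting $h \in H(F)$ to $(1,h) \in \tilde{J}^{\sim}(F)$ and acting on $\Omega$ alone.

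Next, from the given decomposition $\tilde{\pi}|_{G(F)} \simeq m \cdot \bigoplus_i \pi_i$ and the functoriality of the contragredient, one obtains $\tilde{\pi}^{\vee}|_{G(F)} \simeq m \cdot \bigoplus_i \pi_i^{\vee}$. Since the $G(F)$-coinvariants functor is exact and commutes with direct sums, it follows that as $H(F)$-modules,
\[
(\Omega \otimes \tilde{\pi}^{\vee})_{G(F)} \;\simeq\; m \cdot \bigoplus_{i=1}^k (\Omega \otimes \pi_i^{\vee})_{G(F)} \;=\; m \cdot \bigoplus_{i=1}^k \Theta(\pi_i),
\]
which is the desired identity \eqref{E:dualtheta}.

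Finally, for the assertion that $\Theta(\tilde{\pi}) \ne 0$ iff $\Theta(\pi_i) \ne 0$ for some (equivalently all) $i$: the ``some'' direction is immediate from \eqref{E:dualtheta}. For the ``equivalently all'' part, the key input is Lemma \ref{L:element}(iii), which says that $\tilde{G}(F)^+$ acts transitively on the isomorphism classes $\{\pi_i\}$. For any $\tilde{g} \in \tilde{G}(F)^+$, lift to $(\tilde{g}, \tilde{h}) \in \tilde{J}^{\sim}(F)$; conjugation by this element normalizes $G(F)$ and acts on $\Omega$ as an intertwiner between the relevant $G(F)$-actions, from which one deduces $\Theta(\pi_i^{\tilde{g}}) \simeq \Theta(\pi_i)^{\tilde{h}}$ as smooth representations (twisted by $\tilde{h}$). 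In particular nonvanishing of one $\Theta(\pi_i)$ forces nonvanishing of all of them. The main (mild) obstacle is the bookkeeping in this last step: making sure the conjugation interpretation is compatible with the identification of $(G\times H)/Z^{\nabla}$ inside $\mathcal{E}$, which is guaranteed by the construction of $\iota: \tilde{J}^{\sim}(F) \to \mathcal{E}(F)$ at the end of \S 3.
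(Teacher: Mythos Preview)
Your proof is correct and follows essentially the same approach as the paper: both compute $(\Omega \otimes \tilde{\pi}^{\vee})_{G(F)}$ by decomposing $\tilde{\pi}^{\vee}|_{G(F)}$ and distributing the coinvariants. The paper invokes Lemma~\ref{L:element}(i) to justify $(\tilde{\pi}^{\vee})|_{G(F)} \simeq (\tilde{\pi}|_{G(F)})^{\vee}$ (your ``functoriality of the contragredient'' is the same point, but citing the lemma makes explicit why smooth vectors agree), and the paper leaves the ``equivalently all'' claim implicit whereas you supply the conjugation argument via $\tilde{J}^{\sim}(F)$, which is a welcome addition.
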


\begin{proof}
We compute:
\begin{align}
\Theta(\tilde{\pi})|_{H(F)} &= \left( \Omega \otimes \tilde{\pi}^{\vee} \right)_{G(F)}  \quad \text{(by definition)}  \notag \\
&= \left( \Omega \otimes ( m \cdot \bigoplus_i \pi_i^{\vee} ) \right)_{G(F)} \notag \\
&= m \cdot \bigoplus_i  (\Omega \otimes \pi_i^{\vee})_{G(F)}  \notag \\
&= m \cdot \bigoplus_i \Theta(\pi_i). \notag
\end{align}
Here, we have used Lemma \ref{L:element}(i) which ensures that the contragredient of $\tilde{\pi}$ as a representation of $\tilde{G}(F)$ is the same as its contragredient as a representation of $G(F)$, so that
\[  \tilde{\pi}^{\vee}|_{G(F)}  \simeq m \cdot \bigoplus_i \pi_i^{\vee}. \]
This completes the proof of the lemma.
 
\end{proof}
\vskip 5pt

By Lemma \ref{L:bigtheta} and Lemma \ref{L:element}, we deduce:
\begin{cor}  \label{C:finite}
\[  \text{$\Theta(\tilde{\pi})$ has finite length as an $\tilde{H}(F)^+$-module}  \]
if and only if
\[ \text{$\Theta(\pi_i)$ has finite length as an $H(F)$-module for all $i$.} \]
 Assuming this finiteness condition   holds, one has:
 \[  
\Theta(\tilde{\pi})  =  m \cdot \bigoplus_i \Theta(\pi_i) \]
 and
 \begin{equation} \label{E:small}
  \theta(\tilde{\pi})  =  m \cdot \bigoplus_{i\in  I} \theta(\pi_i) \end{equation}
as $H(F)$-modules.  
\end{cor}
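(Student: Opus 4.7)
The plan is to transport structural properties of $\Theta(\tilde{\pi})$ as a $\tilde{H}(F)^+$-module down to the restriction to the normal open subgroup $H(F)$, where Lemma \ref{L:bigtheta} has already given the explicit decomposition
\[  \Theta(\tilde{\pi})|_{H(F)} \simeq m \cdot \bigoplus_{i=1}^k \Theta(\pi_i), \]
and then to invoke Lemma \ref{L:element} (applied to the pair $\tilde{H}(F)^+ \supset H(F)$) to translate finite-length and cosocle information between the two levels.

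First, in order to apply Lemma \ref{L:element}, one needs $T(F)$ to act by a central character on $\Theta(\tilde{\pi})$. This is exactly what Lemma \ref{L:central} supplies: $T(F) \subset \tilde{H}(F)^+$ acts on $\Theta(\tilde{\pi})$ through the same character $\chi$ by which it acts on $\tilde{\pi}$. With this hypothesis met, Lemma \ref{L:element}(ii) applied to $V = \Theta(\tilde{\pi})$ gives that $\Theta(\tilde{\pi})$ is of finite length as an $\tilde{H}(F)^+$-module if and only if it is of finite length as an $H(F)$-module; by the displayed decomposition, the latter is in turn equivalent to each $\Theta(\pi_i)$ being of finite length as an $H(F)$-module. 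This settles the first equivalence in the corollary.

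Under the finite-length assumption, the equality $\Theta(\tilde{\pi}) = m \cdot \bigoplus_i \Theta(\pi_i)$ as $H(F)$-modules is precisely the content of Lemma \ref{L:bigtheta}. For the small-theta statement, I would apply Lemma \ref{L:element}(iv) to $V = \Theta(\tilde{\pi})$: it identifies the $H(F)$-cosocle and the $\tilde{H}(F)^+$-cosocle of $V$ canonically as $H(F)$-modules. Since cosocles commute with finite direct sums (and with multiplication by the multiplicity $m$), the $H(F)$-cosocle of the right-hand side is $m \cdot \bigoplus_i \theta(\pi_i)$; discarding the vanishing summands produces the indexing set $I$ in \eqref{E:small}, which under the finite-length assumption is simply $\{ i : \Theta(\pi_i) \ne 0 \}$.

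Since all three claims reduce to direct applications of Lemmas \ref{L:bigtheta}, \ref{L:central}, and \ref{L:element}, I do not anticipate any genuine obstacle. The only point that truly requires attention is verifying that the $T(F)$-central character hypothesis of Lemma \ref{L:element} is in force for $V = \Theta(\tilde{\pi})$, which is precisely guaranteed by Lemma \ref{L:central}.
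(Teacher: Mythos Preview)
Your proposal is correct and follows the same route as the paper, which simply records the corollary as a consequence of Lemmas \ref{L:bigtheta} and \ref{L:element}. Your explicit invocation of Lemma \ref{L:central} to verify the $T(F)$-central character hypothesis of Lemma \ref{L:element} is a detail the paper leaves implicit; one small remark is that, by the last sentence of Lemma \ref{L:bigtheta}, either all $\Theta(\pi_i)$ vanish or none do, so the index set $I$ in \eqref{E:small} is simply the full index set rather than a proper subset obtained by discarding vanishing terms.
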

 \vskip 5pt

 \subsection{\bf Proof of Proposition \ref{P:Howe}}  \label{SS:proof}
 We can now prove Proposition \ref{P:Howe}.
 Suppose first that the Howe duality property holds for the pair $(G,H)$. By symmetry,  for any  $\tilde{\pi} \in \Irr (\tilde{G}(F)^+)$, we need to show that if $\Theta(\tilde{\pi})$ is nonzero, then it has finite length and $\theta(\tilde{\pi})$ is irreducible. 
 \vskip 5pt
 
 Suppose that
  \[  \tilde{\pi}|_{G(F)} \simeq m \cdot \bigoplus_{i \in I}  \pi_i \]
 for $\pi_i \in \Irr (G(F))$.  Then by Corollary \ref{C:finite},
 \[  
\Theta(\tilde{\pi})|_{H(F)}  =  m \cdot \bigoplus_i \Theta(\pi_i). \]
By hypothesis, $\Theta(\pi_i)$ has finite length for each $i$, and hence so does $\Theta(\tilde{\pi})$ as a $\tilde{H}(F)$-module by Corollary \ref{C:finite}.  Moreover,
 by Lemma \ref{L:bigtheta}, $\Theta(\pi_i)$ is nonzero for each $i$ and $\sigma_i:= \theta(\pi_i)$ is irreducible by hypothesis. 
 
 \vskip 5pt
   
 Let $\tilde{\sigma} \subset \theta(\tilde{\pi})$ be an irreducible summand. 
 Since (by (\ref{E:small}))
 \[  \theta(\tilde{\pi})|_{H(F)}  =  m \cdot \bigoplus_{i\in  I} \theta(\pi_i)  = m \cdot \bigoplus_{i\in  I} \sigma_i, \]
we see that 
\[  \tilde{\sigma}|_{H(F)} \simeq n \cdot \bigoplus_i \sigma_i \quad \text{for some $0 < n \leq m$.} \]
If $m=1$, then $\theta(\tilde{\pi}) = \tilde{\sigma}$ and we would have been done already. For general $m$, we argue as follows.
Switching the roles of $G$ and $H$ in the above argument, it follows by Corollary \ref{C:finite} that
\[  \theta(\tilde{\sigma})|_{G(F)} = n \cdot \bigoplus_i   \theta(\sigma_i). \]
By hypothesis, each $\theta(\sigma_i)$ is irreducible and hence $\theta(\sigma_i) = \pi_i$.
But since $\tilde{\pi} \subset \theta(\tilde{\sigma})$, we deduce that $m \leq n$.  
Hence, we must have $m = n$, so that $\theta(\tilde{\pi}) = \tilde{\sigma}$ is irreducible.
\vskip 5pt

Conversely, suppose now the that Howe duality property holds for the similitude pair $(\tilde{G}(F)^+, \tilde{H}(F)^+)$. 
By symmetry, take  any $\pi \in \Irr (G(F))$ and choose $\tilde{\pi} \in \Irr( \tilde{G}(F)^+)$ such that $\pi \subset \tilde{\pi}$. Suppose
\[  \tilde{\pi}|_{G(F)} \simeq m \bigoplus_{i \in I} \pi_i \]
with $\pi_i \in \Irr (G(F))$ for each $i$, and  $\pi = \pi_{i_0}$ for some $i_0 \in I$.
\vskip 5pt

By  Cor. \ref{C:finite}, we have
 \[  
\Theta(\tilde{\pi})|_{H(F)}  =  m \cdot \bigoplus_i \Theta(\pi_i). \]
Since we are assuming that $\Theta(\pi) \ne 0$, it follows by Lemma \ref{L:bigtheta} that $\Theta(\tilde{\pi})$ is nonzero.
By hypothesis and Cor. \ref{C:finite}, $\Theta(\tilde{\pi})$ has finite length as a $H(F)$-module, and hence so does $\Theta(\pi_i)$ for each $i$. 
\vskip 5pt

Consider $\tilde{\sigma} := \theta(\tilde{\pi})$, which  is irreducible by hypothesis. By (\ref{E:small}), we see that 
\begin{equation} \label{E:sigma1}
 \tilde{\sigma}|_{H(F)} \simeq  m \cdot \bigoplus_{i \in I} \theta(\pi_i), \end{equation}
with each $\theta(\pi_i)$ semisimple of finite length. On writing each $\theta(\pi_i)$ as a sum of irreducible summands,  we have:
\begin{equation} \label{E:sigma2}
  \tilde{\sigma}|_{H(F)} \simeq  n \bigoplus_{j \in J} \sigma_j  \end{equation}
with 
\[  \text{distinct $\sigma_j \in \Irr( H(F))$},  \quad \text{$|I| \leq |J|$}  \quad \text{and $n \geq m$.}  \]
Then applying (\ref{E:small}) with the roles of $G$ and $H$ exchanged, one has
\[ m \cdot \bigoplus_{i \in I} \pi_i =  \tilde{\pi} |_{G(F)}=  \theta(\tilde{\sigma})|_{G(F)} = n \cdot \bigoplus_{j \in J}  \theta(\sigma_j), \]
with $\theta(\sigma_j) \ne 0$.
 This implies that
  \[  \text{$m = n$,} \quad \text{ $|I| = |J|$}  \quad \text{and $\theta(\sigma_j) = \pi_j$,} \]
  after fixing some bijection of $I$ with $J$.
   In particular, going back to (\ref{E:sigma1}) and (\ref{E:sigma2}), we deduce that $\theta(\pi_i) = \sigma_i$ is irreducible for each $i \in I$.
\vskip 5pt

 This completes the proof of Proposition \ref{P:Howe}.
\vskip 10pt

\section{\bf Examples}   \label{S:examples}
We give some interesting examples of similitude dual pairs and similitude theta correspondences.
\vskip 5pt

\subsection{\bf Classical dual pairs}  \label{SS:classical}
We begin by revisiting the case of classical dual pairs.
\vskip 5pt

\begin{itemize}
\item (Symplectic-orthogonal) For a quadratic space $V$  and a symplectic vector space $W$, have
\[  \iota:  H \times G = \O(V) \times \Sp(W) \longrightarrow \Sp(V \otimes W). \]
Then
\[  \Ker(\iota) = \mu_2^{\nabla}. \]
Hence, we have $Z = \mu_2$ and may take $T = \G_m$, so that
\[  \tilde{H} = \GO(V) \quad \text{and} \quad \tilde{G} = \GSp(W)\]
are the usual similitude groups.
Then one has
\[  \iota:  \tilde{J}^{\sim} = (\GO(V) \times \GSp(W))^{\sim} \longrightarrow \Sp(V \otimes W) \]
extending $i$. 
\vskip 5pt

Recall  however that the Weil representation $\Omega$ is not a representation of $\Sp(V \otimes W)$ but rather of its metaplectic cover $\Mp(V \otimes W)$. In the classical theta correspondence, one needs to first construct liftings of $\iota$ to $\Mp(V \otimes W)$:
\[  \tilde{\iota}: \O(V) \times \Sp(W) \longrightarrow \Mp(V \otimes W), \]
before one can restrict the Weil representation to $\O(V) \times \Sp(W)$. Such splittings exist if $\dim V$ is even and have been systematically constructed by  Kudla. 
Likewise, to obtain a similitude theta correspondence, we would need to extend $\tilde{\iota}$ to:
\[  \tilde{\iota}: (\GO(V) \times \GSp(W))^{\sim} \longrightarrow \Mp(V \otimes W). \]
Such extensions have been constructed by B. Roberts \cite{R}. This accounts for the main complexity in the theory of similitude theta correspondences for symplectic-orthogonal  dual pairs.

\vskip 5pt

\item (Unitary) If $V$ is a Hermtitian space and $W$ a skew-Hermitian space relative to a quadratic extension $E/F$, one has
\[  \iota: \U(V) \times \U(W) \longrightarrow \Sp(\Res_{E/F} (V \otimes_E W))\]
with
\[  \Ker(\iota) = ( E^1)^{\nabla} \]
where we have written $E^1$ for  $\mathrm{Res}_{E/F}^1(\G_m)$. 
In this case, we take $T= \Res_{E/F} \G_m$ to get the usual similitude groups
\[  \tilde{G} = \GU(V) \quad \text{and} \quad \tilde{H} = \GU(W). \]
As in the symplectic-orthogonal case, one needs to construct a lifting 
\[  \tilde{\iota} : \tilde{J}^{\sim} = (\GU(V) \times \GU(W))^{\sim} \longrightarrow \Mp( \Res_{E/F}(V \otimes_E W)) \]
before one can consider the similitude theta correspondence. However, the unitary case is somewhat better than the symplectic-orthogonal one. 

 \vskip 5pt
 
 More precisely, note that 
 \[ \iota: \tilde{J}^{\sim} = (\GU(V) \times \GU(W))^{\sim} \longrightarrow \Sp(V \otimes W) \]
 has image contained in $\U(V \otimes_E W)$. However, the metaplectic covering is split over $\U(V \otimes_E W)$! By fixing such a splitting, we thus obtain
 \[  \tilde{J}^{\sim} \longrightarrow \U(V \otimes_E W) \longrightarrow \Mp(V \otimes_E W). \]
  Thus, there is no need to do extra work beyond that needed for splitting the isometry dual pairs.
\end{itemize} 
 \vskip 5pt
 
 \subsection{\bf Exceptional dual pairs}  \label{SS:exceptional}
 Given the somewhat sporadic nature of the geometry of exceptional groups, it is not surprising that it is harder to formulate a uniform theory of dual pairs in exceptional groups.
 Nonetheless, we shall attempt to do so for a few families of such dual pairs, by realizing the dual pairs as
 \[  G \times H = \Aut(A) \times \Aut(B) \]
 for two algebraic structures $A$ and $B$. 
 
 \vskip 5pt

 \subsection{\bf Composition algebras and cubic  norm structures}
 We refer to reader to \cite[Chap. VIII, \S 33]{KMRT} for the notion of composition algebras and \cite[Chap. 9, \S 37 and \S38]{KMRT} for the notion of cubic norm structures (also known as Freuthendal-Jordan algebras).
 Let $C$ be a composition algebra over $F$, so that $\dim C = 1$, $2$, $4$ or $8$. Let $J$ be a nontrivial cubic norm structure over $F$, so that $J$ has dimension $3$, $6$, $9$, $15$ or $27$.     Then one has an isometry dual pair \cite{MS, Ru}
 \[  \Aut(C) \times \Aut(J)  \hookrightarrow   \mathcal{E} \]
 where $\mathcal{E}$ is a certain ambient  adjoint group. We enumerate the most interesting  (split) cases, where $C$ or $J$ have the maximal dimensions. 
 \vskip 5pt
 
 \begin{itemize}
 \item[-]  For $\dim C = 8$, one has $\Aut(C) = G_2$ and as $J$ varies, one has the dual pairs 
 \[G_2 \times \Aut(J)  \subset \mathcal{E}  \]
 with $\Aut(J)$ and $\mathcal{E}$ given in the following table.
 
 \vskip 5pt
 \begin{center}
 \begin{tabular}{|c|c|c|c|c|c|}
 \hline
 $\dim J$  &  3 & 6 & 9 & 15 & 27  \\
 \hline
 $\Aut(J)$ & $S_3$ & $\SO_3$ & $\PGL_3 \rtimes \Z/2\Z$ & $\PGSp_6$ & $F_4$  \\
 \hline
  $\mathcal{E}$ & $\PGSO_8 \rtimes S_3$ & $F_4$ & $E_6 \rtimes \Z/2\Z$ & $E_7$ & $E_8$ \\
  \hline
  \end{tabular}
 \end{center}

 \vskip 10pt
 
 \item[-] For $\dim J = 27$, one has $\Aut(J) = F_4$ and as $C$ varies, one has the dual pairs
 \[   F_4 \times \Aut(C)  \subset \mathcal{E} \]
  with $\Aut(C)$ and $\mathcal{E}$ given in the following table.
  \vskip 5pt
  \begin{center}
 \begin{tabular}{|c|c|c|c|}
 \hline
 $\dim C$  &  2 & 4 & 8   \\
 \hline
$\Aut(C)$ & $
Z/2\Z$ & $\PGL_2$ & $G_2$ \\
\hline
$\mathcal{E}$ & $E_6 \rtimes \Z/2\Z$ & $E_7$ & $E_8$ \\
\hline
\end{tabular}
\end{center}
 \end{itemize}
 \vskip 5pt
 
   Observe that the map from $\Aut(C) \times \Aut(J)$ to $\mathcal{E}$ is injective. Thus, in such cases, we do not have a theory of similitude theta correspondence.
 \vskip 10pt

 \subsection{\bf Twisted composition algebras}
 Let $E$ be an \'etale cubic $F$-algebra. Then one has the notion of twisted composition algebras with respect to $E/F$; the reader can consult  \cite[Chap. 8, \S 36]{KMRT} for this.
   One way such a twisted composition algebra arises is to start with a composition $F$-algebra $C$  and consider $C^{\flat} = C \otimes_F E$. Then a construction in \cite[\S 36C, Pg 499]{KMRT} equips $C^{\flat}$ with the structure of a twisted composition algebra, built out of the composition algebra structure on $C$.
 In fact, if $C^{\flat}$ is any  twisted composition algebra, then $\dim_E(C^{\flat}) = 1$, $2$, $4$ or $8$ \cite[Cor. 36.4, Pg 492]{KMRT}.
 \vskip 5pt
 
 Let $C^{\flat}_1$ and $C^{\flat}_2$ be two twisted composition algebras relative to $E/F$. Then one has a dual pair \cite{GS2}
 \[ i:   \Aut_E(C^{\flat}_1) \times \Aut_E(C^{\flat}_2)  \longrightarrow \mathcal{E} \]
 where $\mathcal{E}$ is some ambient adjoint group. We enumerate the most interesting case with $\dim_E C^{\flat}_1$ maximal. Then one has $\Aut_E(C^{\flat}_1) =\Spin_8^E$, a simply-connected quasi-split group of type $D_4$ determined by $E$.  As $C^{\flat}$ varies, one has the dual pair 
 \[   \Spin_8^E \times \Aut_E(C^{\flat}) \longrightarrow \mathcal{E} \]
 with $\Aut(C^{\flat})$ and $\mathcal{E}$ given in the following table (assuming $E = F^3$ so that the groups are split).
 \vskip 5pt
 
 \begin{center}
 \begin{tabular}{|c|c|c|c|c|}
 \hline
 $\dim_E C^{\flat}$ & 1  &  2 & 4 & 8   \\
 \hline
$\Aut(C^{\flat})$ & $(\Res_{E/F} \mu_2 )/ \mu_2$ & $  (\Res_{E/F}\G_m)/ \G_m \rtimes \Z/2\Z$ & $\Res_{E/F}(\SL_2)/ \mu_2$ & $\Spin^E_8$ \\
\hline
$\mathcal{E}$ & $F_4$ & $E_6 \rtimes \Z/2\Z$  & $E_7$ & $E_8$ \\
\hline
\end{tabular}
\end{center}
 \vskip 5pt
 
 Observe that the group $\Aut_E(C^{\flat})$ has center isomorphic to
 \[  Z = \Res_{E/F}(\mu_2) / \mu_2 \simeq  \Res^1_{E/F}(\mu_2) \]
 where the last isomorphism is given by
 \[   x \mapsto N_{E/F}(x) /x =: x^{\#}. \]
 Moreover, the map 
 \[  i:  \Spin_8^E  \times \Aut(C^{\flat}) \longrightarrow \mathcal{E} \]
 has
 \[  {\mathrm Ker}(i) =  Z^{\nabla}  \simeq \mathrm{Res}^1_{E/F}(\mu_2).  \]
 Thus, in this case, we have a theory of similitude theta correspondence which we shall now explicate.
 \vskip 5pt
 
 We first pick the data (a) and (b)  in \S \ref{SS:ID} as in Example (3) of \S \ref{SS:Examples}. Namely,
 we  take $T = \mathrm{Res}_{E/F} (\mathbb{G}_m)$ and consider the natural embedding
 \[  j: \mathrm{Res}^1_{E/F}(\mu_2) \hookrightarrow T = \mathrm{Res}_{E/F} (\mathbb{G}_m). \]
 As in \S \ref{SS:Examples}, we then  have the embedding
 \[ \begin{CD}
 T/ j(Z)  @>N_{E/F} \times [2]>>  \mathbb{G}_m \times \mathrm{Res}_{E/F}(\mathbb{G}_m) 
 \end{CD} \]
 whose cokernel has dimension $1$. In this special case, since $3-2=1$,  it turns out that  
 \[  T/ j(Z) \simeq   \mathrm{Res}_{E/F}(\mathbb{G}_m) = T \]
 via the map
 \[  x \mapsto N_{E/F}(x)/x^2 = x^{\#}/ x. \]
 Now let us consider the similitude groups
 \[ \begin{CD}
  \tilde{G} = (\Spin^E_8 \times T) /Z^{\nabla}  @>pr_2>> T/ j(Z) @>x \mapsto x^{\#}/x>> T = \mathrm{Res}_{E/F}(\mathbb{G}_m) \end{CD} \]
  and
  \[ \begin{CD} 
 \tilde{H} = (\Aut_E(C^{\flat}) \times T)/ Z^{\nabla}  @>pr_2>> T/j(Z) @>x \mapsto x^{\#}/x>> T = \mathrm{Res}_{E/F}(\mathbb{G}_m), \end{CD} \]
 Hence, the similitude maps $\sim_G$ and $\sim_H$ on the group of $F$-rational points both take value in $T(F) = E^{\times}$. 
 \vskip 5pt
  
 We would like to know what the groups $\tilde{G}(F)^+$ and $\tilde{H}(F)^+$ are. 
 If $F$ is nonarchimedean,  then $H^1(F, \Spin_8^E) = 0$ and by Proposition \ref{P:surj}, we see that the map $\sim_G: \tilde{G}(F) \longrightarrow E^{\times}$ is surjective. Hence $\tilde{H}(F)^+ = \tilde{H}(F)$ when $F$ is nonarchimedean.  The same is true for $\tilde{G}(F)^+$ if $\dim_E C^{\flat} = 8$.
We shall now consider the question of whether $\sim_H: \tilde{H}(F) \longrightarrow E^{\times}$ is surjective when $\dim_E C^{\flat} = 2$ or $4$, ignoring the somewhat degenerate case when $\dim_E C^{\flat} =1$. For this, it is convenient to give an alternative description of $\tilde{H}$ for which it is easier to describe $\tilde{H}(F)$. 
 \vskip 5pt
 
 \vskip 5pt
 \begin{itemize}
 \item When $\dim_E C^{\flat} = 4$,  the natural inclusion
\[  \mathrm{Res}_{E/F}(\SL_2 \times \mathbb{G}_m) \hookrightarrow \mathrm{Res}_{E/F}(\GL_2 \times \mathbb{G}_m) \]
descends to an isomorphism
\[  \tilde{H}  \longrightarrow  \mathrm{Res}_{E/F}(\GL_2 \times \mathbb{G}_m)/  \iota(\mathrm{Res}(\mathbb{G}_m)) \]
where
\[  \iota(t) =  (t, (t^{-1})^{\#}). \]
  Then the group $\tilde{H}(F)$ is given by
  \[  
  \tilde{H}(F) = (\GL_2(E) \times E^{\times})/ \iota(E^{\times})   \]
  and the similitude character  is
  \[  \sim_H(h, x) = \det(h) \cdot \frac{x^{\#}}{x}. \] 
 From this, it is clear that $\sim_H$ is surjective, so that $\tilde{G}(F)^+ = G(F)$.   
 \vskip 5pt
 
 \vskip 5pt
 
 \item When $\dim_E C^{\flat} = 2$ and  recalling that $T = \mathrm{Res}_{E/F}(\mathbb{G}_m)$, we have an isomorphism:
 \[ \begin{CD}
  \tilde{H}^0 = (T \times   T) /(\mathbb{G}_m \times \mathrm{Res}_{E/F} (\mu_2))  \\
  @V{\simeq}V(t_1,t_2) \mapsto (t_1,t_1^{-1},t_2)V  \\
  (T \times T \times T)/ (\G_m \times T) 
 \end{CD} \]
where
\begin{itemize}
\item[(i)]  the embedding 
\[ \mathbb{G}_m \times \mathrm{Res}_{E/F} (\mu_2) \longrightarrow T \times T \]
in the first line  is given by
\[  (a, z) \mapsto (az, (z^{-1})^{\#}), \]
 \item[(ii)]  the embedding 
\[  \G_m \times T \longrightarrow T \times T \times T \]
in the second  line is given by
\[  (a,t) \mapsto (at, a^{-1}t, (t^{-1})^{\#}). \]
\end{itemize}
 With this alternative description of $\tilde{H}^0$, all tori involved are induced tori, so that
 \[  \tilde{H}^0(F) \simeq
  (E^{\times} \times E^{\times}  \times E^{\times} )/ (F^{\times} \times E^{\times})  \] 
 where $F^{\times} \times E^{\times}$ is embedded into $(E^{\times})^3$ as in (ii) above.
 In this incarnation, the similitude character is given by
 \[  \sim_H (x,y,z) =  x \cdot y \cdot\frac{z^{\#}}{z} \in E^{\times}.  \]
 In particular, $\sim_H$ is surjective onto $E^{\times}$, so that $\tilde{G}(F)^+ = \tilde{G}(F)$ in this case as well.
 \end{itemize}
 
 \vskip 5pt
 
 In particular, let us consider the case where the central character is trivial. Then we obtain for example the similitude dual pairs of adjoint type:
  \[  \PGSO_8^E(F) \times \begin{cases} 
 \PGSO_8^E(F) \text{ in $E_8$;} \\
 \PGL_2(E) \text{  in $E_7$;} \\
  (E^{\times}/F^{\times}) \rtimes \Z/2\Z \text{ in $E_6 \rtimes \Z/2\Z$.} 
  \end{cases} \]

  \vskip 10pt
 
 \subsection{\bf Jordan pairs}
We refer the reader to \cite[Introduction and Chap. 1]{Loos} for the notion of Jordan pairs. Since this notion  is perhaps less familiar to the reader than the notion of Jordan algebras, let us give some motivation and a brief introduction.  
\vskip 5pt

 A Freuthendal-Jordan algebra or a cubic norm structure $J$ comes equipped with a norm form, which is a cubic form $N_J:  J \longrightarrow F$.  The  similitude group of this cubic form
 \[  \sim(J,N_J) = \{ (g, t) \in \GL(J) \times \G_m: N_J \circ g = t \cdot N_J \} \]
 is  called the structure group of $J$, and we will call the isometry group $\mathrm{iso}(J,N_J)$ of $N_J$ the reduced or special structure group (this consists of those pairs $(g,t)$ with $t=1$).   If $J$ is the 27-dimensional Jordan algebra, for example, $\sim(J, N_J)$ is the group $GE_6$ and ${\mathrm{iso}}(J,N_J)$ is the simply-connected $E_6$. 
 \vskip 5pt
 
 Now the group $\sim(J,N_J)$ acts  irreducibly on $J$ and its linear dual $J^*$, but these two representations are not isomorphic (as their central characters are  different) and there is no reason to favour one of these representation over the other (a more familiar example is: a group isomorphic to $\GL(V)$ has two standard representations).   Indeed, the trace bilinear form on $J$ allows us to identify $J$ with  $J^*$, and this defines an outer automorphism of $\sim(J,N_J)$ which is the inverse map on the center of $\sim(J,N_J)$ and which interchanges the two representations. In addition, in the context of $E_6$, when one considers a quasi-split $E_6$ associated to a quadratic field extension $K/F$, the two 27-dimensional representations are fused together to give a single rational representation over $F$. Hence, such a quasi-split $E_6$ cannot be realized as the isometry group of a cubic form over $F$, unlike the split form. 
 \vskip 5pt

 The theory of Jordan pairs, introduced by Loos \cite{Loos}, treats both these representations $J$ and $J^*$ on equal footing and realizes $\sim(J,N_J)$ as the automorphism group of an algebraic structure on the pair $\{ J, J^*\}$. More formally, a Jordan pair over $F$ consists of the data:
 \vskip 5pt
 \begin{itemize}
 \item a pair $(J^+, J^-)$ of $F$-vector spaces;
 \item a pair of quadratic maps defined over $F$: 
 \[  Q^+: J^+ \longrightarrow \Hom_F(J^-, J^+) \quad \text{and} \quad  Q^- : J^- \longrightarrow \Hom_F(J^+, J^-), \]
  \end{itemize}
 satisfying the following axioms \cite[Pg. 1, Def. 1.2]{Loos} for $\epsilon = \pm$ and any $F$-algebra $K$:
 \vskip 5pt
 \begin{itemize}
 \item[(JP1)]  
 \[  \{ x, y, Q^{\epsilon}(x)(z) \}^{\epsilon} = Q^{\epsilon}(x) \left( \{ y ,x ,z \}^{-\epsilon} \right) \quad \text{for any $x \in J^{\epsilon}(K)$ and $y,z \in J^{-\epsilon}(K)$}; \]
 \item[(JP2)] 
 \[  \{ Q^{\epsilon}(x)(y), y,z \}^{\epsilon}  = \{ x, Q^{-\epsilon}(y)(x), z \} ^{\epsilon} \quad \text{for any $x, z \in J^{\epsilon}(K)$ and $y\in J^{-\epsilon}(K)$}; \]
 \item[(JP3)] 
 \[ Q^{\epsilon}(Q^{\epsilon}(x)(y)) = Q^{\epsilon}(x) \circ Q^{-\epsilon}(y) \circ Q^{\epsilon}(x) \quad \text{for any $x \in J^{\epsilon}(K)$ and $y \in J^{-\epsilon}(K)$}, \]
  \end{itemize}
  where we have set
  \[  \{ x,y,z \}^{\epsilon} = Q^{\epsilon}(x+z)(y) - Q^{\epsilon}(x)(y) - Q^{\epsilon}(z)(y) \in J^{\epsilon} \]
  for the linearization  of the map $(x,y) \mapsto Q^{\epsilon}(x)(y)$.  These axioms may look a bit unwieldy for the uninitiated (including ourselves), but we will not seriously make use of them in this paper. 
 \vskip 5pt
 
  A homomorphism from one Jordan pair $(J^+, J^-)$ to another $(V^+, V^-)$ is a pair of linear maps 
 \[  \phi^{\epsilon}: J^{\epsilon}  \longrightarrow V^{\epsilon} \quad \text{(with $\epsilon = \pm$)} \]
 such that
 \[  \phi^{\epsilon}(Q_J^{\epsilon}(x) (y))  =  Q_V^{\epsilon}(\phi^{\epsilon}(x))( \phi^{-\epsilon}(y)) \quad \text{ for $x \in J^{\epsilon}$ and  $y \in J^{-\epsilon}$.} \]
 Hence, one has a notion of the automorphism group of a Jordan pair $ (J^+, J^-)$, which is a subgroup of $\GL(J^+) \times \GL(J^-)$. 
 Via the projection onto each of the factors, $\Aut(J^+, J^-)$ has two natural representations. 
 \vskip 5pt

 \vskip 5pt
 \subsection{\bf Examples}
 We give two pertinent examples here:
 \vskip 5pt
\begin{itemize}
\item For a vector space $V$,  set
\[  (V^+, V^-) = (V, V^*), \]
 and define:
 \[  Q^+(v) ( v^*) = \langle v, v^* \rangle \cdot v, \qquad  Q^-(v^*) (v) = \langle v^*, v \rangle \cdot v^*. \]
 This defines a Jordan pair whose automorphism group is the subgroup
 \[  \GL(V) \hookrightarrow \GL(V^+) \times \GL(V^-). \]
  Thus, this gives a description of the general linear group without favouring one of its two standard representations. 
   \vskip 5pt
   
   \item For a cubic norm structure $J$, the pair 
   \[  (J^+, J^-) = (J, J^*) \]
   inherits the structure of a Jordan pair from its Jordan algebra structure  and the automorphism group of $(J^+, J^-)$ is precisely the structure group  of $J$.  More precisely,  
   recall that in addition to the cubic norm form $N_J$ and an identity element $1_J$, a cubic norm structure $J$ comes equipped with
   \vskip 5pt
   \begin{itemize}
   \item[-]     a nondegenerate symmetric bilinear trace form $T: J \times J \rightarrow F$;
   \item[-]   a quadratic map $x \mapsto x^{\#}$ from $J$ to itself, with linearization 
   \[ x \times y = (x+y)^{\#} - x^{\#} - y^{\#}. \]
   \end{itemize}
   Given these, one sets
   \[  U_x(y) =  T(x,y)x - x^{\#} \times y \quad \text{for $x,y \in J$} \]
   and  observes that $U$ is quadratic in $x$ and linear in $y$. 
   The reader familiar with the notion of quadratic Jordan algebras will recognize that this is the $U$-operator in that theory.
   One then has a Jordan pair defined by setting:
   \[  (J^+, J^-) = (J, J) \quad \text{and} \quad (Q^+, Q^-) = (U, U). \]
   Since $T$ gives an identification of $J$ with $J^*$, we may also describe this Jordan pair as $(J, J^*)$. 
   \end{itemize} 
   \vskip 10pt
   
   \subsection{\bf Dual pairs}
 After this brief sidetrack, we can now introduce a family of dual pairs in exceptional group of the form \cite{MS, Ru}
 \[   i: \SL_3 \times \mathrm{iso}(J, N)  \longrightarrow \mathcal{E}  \]
 where $J$ is a Freudenthal Jordan algebra. For split groups, 
 this dual pair is obtained by removing from the extended Dynkin diagram of $\mathcal{E}$ (of type $F_4$ or $E_n$) the simple vertex joined to the unique vertex attached to the extra vertex. 
 One sees that the extended Dynkin diagram breaks into two pieces, with one of them of type $A_2$.
 \vskip 5pt
 
  By the notion of Jordan pairs introduced above, we recognize that this dual pair is of the form
  \[   \Aut(V^+, V^-)^{der} \times \Aut(J^+, J^-)^{der}  = \SL(V) \times  \mathrm{iso}(J, N_J), \]   
  where $\dim V = 3$ and the superscript {\em der} signifies the derived group.
  \vskip 10pt

 The following summarizes the algebras and groups which occur, where $J(C)$ the space of $3$ by $3$ hermitian symmetric matrices with coefficients in 
 a composition algebra $C$.  
 \vskip 5pt
 
 \begin{center}
 \begin{tabular}{|c|c|c|c|c|}
 \hline
 $\mathcal{E}$  & $F_4$ & $E_6\rtimes \mathrm{Gal}(K/F)$  &$E_7$  &  $E_8$ \\
 \hline
 $C$ & $F$ & quadratic $K$ & quaternion $B$ & octonion $\mathbb{O}$  \\
 \hline
 $\dim J(C)$ & 6 &  9 & 15 & 27  \\
 \hline
 ${\mathrm{iso}}(J,\det)$ & $\SL_3(F)$ & $\SL_3(K)/ {\Res}_{K/F}^1(\mu_3)\rtimes \mathrm{Gal}(K/F) $ & $\SL_3(B)/\mu_2 $ & $E_6^{sc}$ \\
 \hline
 \end{tabular}
 \end{center}
\vskip 5pt

 Observe that the centers of $\SL(V)$ and $\mathrm{iso}(J,\det)$ are isomorphic to $\mu_3$ in all cases and 
  \[  \Ker(i) =  \mu_3^{\nabla}. \]
  Hence, we take $T = \G_m$, so that
  \[  \tilde{J}^{\sim} = ( \GL(V) \times \sim(J, N_J))^{\sim}. \]
  
  \begin{prop} If $J=J(C)$ is  the space of $3$ by $3$ hermitian symmetric matrices with coefficients in 
 a composition algebra $C$, and $N_J=\det$, then the similitude character $\sim :  \sim(J, N_J))(F) \rightarrow F^{\times}$ is surjective.  
 \end{prop}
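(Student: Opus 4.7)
The plan is to exhibit, for each $t \in F^{\times}$, an explicit element of $\sim(J, N_J)(F)$ whose similitude factor is $t$. Two natural sources of such elements will suffice.

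First, I would observe that scalar dilation $d_{\lambda}: X \mapsto \lambda X$ lies in $\sim(J, N_J)(F)$ for any $\lambda \in F^{\times}$, with similitude factor $\lambda^3$, because $N_J$ is a cubic form. This already shows that the image of the similitude character contains $(F^{\times})^3$.

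Second, I would use the $U$-operator introduced in the text: for $e \in J(F)$, one has
\[
U_e(X) = T(e, X) e - e^{\#} \times X.
\]
A standard identity for cubic norm structures (generalizing the classical $\det(g X g^T) = \det(g)^2 \det(X)$ for $J = \mathrm{Sym}_3$) asserts that
\[
N_J(U_e(X)) = N_J(e)^2 \cdot N_J(X).
\]
In particular, when $N_J(e) \neq 0$, $U_e$ lies in $\sim(J, N_J)(F)$ with similitude factor $N_J(e)^2$. Now, $J = J(C)$ contains the diagonal elements $e = \mathrm{diag}(a,1,1)$ for any $a \in F^{\times}$, and $N_J(e) = a$. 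Hence the image of the similitude character also contains $(F^{\times})^2$.

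Since $\gcd(2,3) = 1$, we have $(F^{\times})^2 \cdot (F^{\times})^3 = F^{\times}$, so the similitude character is surjective on $F$-points. Concretely, given $t \in F^{\times}$, the composition $d_t \circ U_{\mathrm{diag}(t^{-1},1,1)}$ lies in $\sim(J, N_J)(F)$ and has similitude factor $t^3 \cdot (t^{-1})^2 = t$. The only step that really requires care is the identity $N_J(U_e X) = N_J(e)^2 N_J(X)$; this is a basic (and well-documented) feature of cubic norm structures that holds uniformly across the four cases of $C$, so I do not anticipate any genuine obstacle. Everything else reduces to the coprimality of $2$ and $3$.
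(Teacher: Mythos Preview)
Your argument is correct, but it differs from the paper's approach. The paper gives a single explicit linear map with similitude factor exactly $t$: for $x \in J(C)$, scale the upper-left $2\times 2$ block by $t$, divide the remaining diagonal entry by $t$, and leave the off-diagonal entries in the third row and column unchanged. A direct inspection of the cubic determinant formula for $3\times 3$ Hermitian matrices over $C$ shows that each monomial picks up a single factor of $t$, so $\det(g(x)) = t\det(x)$.

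Your route instead combines two families of structural transformations --- scalars (similitude factor a cube) and $U$-operators at diagonal elements (similitude factor a square) --- and appeals to $\gcd(2,3)=1$. This is a perfectly valid and rather elegant alternative; the identity $N_J(U_e X) = N_J(e)^2 N_J(X)$ is indeed a basic feature of cubic norm structures and holds uniformly in $C$. The paper's proof is more elementary in that it avoids invoking the $U$-operator theory altogether and produces the desired element in one stroke, whereas your argument leverages structure already set up in the paper and has the pleasant feature of explaining \emph{why} surjectivity is automatic (the similitude group always contains elements hitting cubes and squares). Either approach is fine.
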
 
 \vskip 5pt
 
 \begin{proof} Let $t\in F^{\times}$.  Let $g: J \rightarrow J$ be the linear transformation such that for any $x\in J$, $y=g(x)$ is obtained from $x$ by rescaling the 
 entries in the following way. The entries in the upper-left  $2\times 2$ block are multiplied by $t$, the remaining diagonal entry is divided by $t$ and the 
 other entries are left unchanged. Then $\det(y)= t \det(x)$.  
 \end{proof} 
 Thus in this case $\tilde G(F)=\tilde G(F)^+$ and $\tilde H(F)=\tilde H(F)^+$
  and the resulting similitude dual pair is 
  \[   \Aut(V^+, V^-) \times \Aut(J^+, J^-) = \GL(V) \times  {\sim}(J, N_J). \]   
  Moreover, the group ${\sim}(J, N_J)(F)$ is given as follows:
  \[ {\sim}(J, N_J)(F) = 
  \begin{cases}
  \GL_3(F), &\text{  if $C = F$;} \\
  (\GL_3(K) \times F^{\times}) / \{ (t, N_{K/F}(t)^{-1}):  t \in K^{\times} \} \rtimes \mathrm{Gal}(K/F), &\text{  if $C = K$;} \\
  (\GL_3(B) \times F^{\times}) / \{ (t, t^{-2}) : t \in F^{\times} \}, &\text{  if $C = B$;} \\
 \mathrm{GE}_6(F), &\text{  if $C = \mathbb{O}$.} 
  \end{cases} \]
     \vskip 5pt

  \subsection{\bf Twisted Jordan pairs}
   It is in fact better to slightly repackage the definition of Jordan pairs introduced above, by setting
 \[  J^{\square} = J^+ \times J^- \quad \text{ regarded as an $F \times F$-module} \]
 and
 \[  Q^{\square}  = Q^+ \times Q^- : J^{\square} \longrightarrow  \Hom_{F \times F} ((J^{\square})^{\sigma}, J^{\square}) \]
 where $\sigma$ is the nontrivial automorphism of the $F$-algebra $F \times F$ given by switching the two components. The axioms (JP1-3) for a Jordan pair can be accordingly reformulated in terms of $(J^{\square},Q^{\square})$. A homomorphism $\phi: (J^{\square},Q_J^{\square}) \longrightarrow (V^{\square}, Q_V^{\square})$ is then a $F \times F$-module homomorphism $J^{\square} \longrightarrow V^{\square}$  such that
 \[  \phi(Q_J^{\square}(x)(y)) = Q_V^{\square}(\phi(x))(\phi(y)) \quad \text{ for $x, y \in J^{\square}$.} \]
 The advantage of such a repackaging is that it allows one to introduce twisted version (or $F$-rational forms) of Jordan pairs, where one replaces $F \times F$ by a separable quadratic field extension $K/F$.     
 \vskip 5pt
 
 Moreover, in the definition of an automorphism of $(J^{\square},Q^{\square})$, we may consider those $\phi$'s which are  $(F \times F, \sigma)$-linear instead of $(F \times F)$-linear.  This gives rise to a larger $F$-automorphism group  $\Aut_F(J^{\square}, Q^{\square})$  containing the subgroup  $\Aut_{F \times F}(J^{\square},Q^{\square})$ as a normal subgroup of index $2$; these extra $F$-automorphisms are outer automorphisms of $\Aut_{F \times F}(J^{\square},Q^{\square})$ and their action on $J^{\square}$ exchanges the two factors $J^+$ and $J^-$. 
  
 \vskip 5pt

 Now let $K$ be a separable \'etale $F$-algebra with nontrivial automorphism $\sigma \in \Aut(K/F)$. We shall define a notion of Jordan pair $(J,Q)$ with respect to $K/F$, following a paper of de Medts \cite{dM} where it was introduced under the guise of  {\em Hermitian cubic norm structure} \cite[\S 4]{dM}.  Such a Jordan pair consists of:
 \vskip 5pt
 \begin{itemize}
 \item a $K$-vector space $J$;
 \item a $K$-quadratic map $Q: J \longrightarrow \Hom_K(J^{\sigma}, J)$
 \end{itemize}
 satisfying the reformulated analog of (JP1-3) (we will not elaborate further here).   Let us give two examples:
\vskip 5pt

\begin{itemize}
\item Suppose $(V, h)$ is a $K$-vector space  equipped with a Hermitian form $h : V \times V^{\sigma} \rightarrow K$. Then defining $Q$ by
\[  Q(v) (w) = h(v,w) \cdot v  \quad \text{for $v,w \in V$} \]
gives a Jordan pair relative to $K/F$.  The automorphism group of $(V,Q)$ is precisely the unitary group $\U(V,h)$. 
\vskip 5pt

\item  In \cite[Thm. 4.6]{dM}, it was explained how a cubic norm structure $J$ over $K$ and a ``$\sigma$-linear  self adjoint autotopy" of $J$ give rise to a Hermitian cubic norm structure on $J$, from which one can deduce a Jordan pair $(J,Q)$ over $K/F$ with $Q$ given by the $U$-operator for the Hermitian cubic norm structure. If $J$ is the exceptional Jordan algebra, then $\Aut_K(J, Q)^{\der}$ is  the quasi-split simply connected $E_6^K$ associated to $K/F$ and 
\[  \Aut_K(J, Q)/ \Aut_K(J, Q)^{\der} \simeq {\Res}^1_{K/F}(\mathbb{G}_m) \]
is the anisotropic 1-dimensional torus associated to $K$.
 \end{itemize}
\vskip 5pt

Associated to these two examples and as a twisted version of the dual $\SL_3 \times \mathrm{iso}(J, N_J)$ introduced in the previous subsection, one has the quasi-split but non-split dual pair
\[   \SU(V,h) \times \Aut_K(J,Q)^{\der} \longrightarrow \mathcal{E}, \]
where we assume for simplicity that $\mathcal{E}$ is split.
The centers of these groups are isomorphic to 
\[  Z = {\Res}^1_{K/F}(\mu_3). \] 
Following Example (3) in \S \ref{S:simi group}, we choose $T = \mathrm{Res}_{K/F}(\G_m)$ and note that there is a short exact sequence
\[ \begin{CD}
1 @>>> Z @>>>  T @>>> T @>>> 1 \end{CD} \]
defined by the map
\[ x \mapsto x^{3}/N_{K/F}(x) \quad \text{ on $T$.} \]
The  associated similitude dual pair is:
\[ \GU(V,h) \times \mathrm{G}\Aut_K(J,Q)  \]
where
\[  \mathrm{G}\Aut_K(V,Q)   = ({\Res}_{K/F}(\mathbb{G}_m) \times \Aut_K(J,Q)^{der}) / Z^{\nabla} = (\mathbb{G}_m \times \Aut_K(J,Q)) / \mu_2^{\Delta}. \]
We note here that the similitude character on $\GU(V,h)$ is not the usual similitude character $\sim: \GU(V,h) \longrightarrow \G_m$. Rather, by construction, it is the homomorphism
\[  \GU(V,h) \longrightarrow \mathrm{Res}_{K/F}(\G_m) \]
given by
\[   g \mapsto \det(g) / \sim(g). \]
This is surjective onto $K^{\times}$, as one can see by restricting it to a maximal torus in a Borel subgroup (or by observing that $\mathrm{SU}(V,h)$ is simply-connected when $F$ is nonarchimedean). 
Hence $\mathrm{G}\Aut_K(J,Q)(F)^+ = \mathrm{G}\Aut_K(J,Q)(F)$.  We will leave it to the reader to work out what $\GU(V,h)(F)^+$ is  in the various cases.
 \vskip 5pt

\section{\bf Seesaw Duality}
In the theory of theta correspondence, seesaw dual pairs and the associated seesaw dualities serve as useful tools. In this section, we examine how this seesaw duality is impacted by the extension to similitude dual pairs. Hence, suppose one has a seesaw diagram of dual pairs in $\mathcal{E}$:
 \[
 \xymatrix{
  G' \ar@{-}[dr] \ar@{-}[d] & H  \ar@{-}[d] \\
  G \ar@{-}[ur] & H'}
\]  
For $\pi \in \Irr (G(F))$ and $\sigma \in \Irr (H'(F))$, the associated seesaw identity reads:
\[ \Hom_{H'}(\Theta(\pi), \sigma) \simeq \Hom_{G \times H'}(\Omega, \pi \otimes \sigma) \simeq \Hom_G( \Theta(\sigma), \pi). \]
\vskip 5pt

Now let us make the following hypotheses:
\vskip 5pt

\begin{itemize}
\item The pair $(G,H)$ gives rise to a corresponding similitude dual pair $(\tilde{G} , \tilde{H})$, as we explained earlier in this paper;
\item for the other dual pair $(G', H')$, $G' \times H'$  maps injectively  into $\mathcal{E}$ (so that there is no associated similitude pair). 
\end{itemize}
Observe also that 
\[  G \times H' \subset \tilde{J}^{\sim} = (\tilde{G} \times \tilde{H})^{\sim}(F). \]
Then we have:

\vskip 5pt

\begin{prop}
Assume the above hypotheses. Then for $\tilde{\pi} \in \Irr (\tilde{G}(F))$ and $\sigma \in \Irr( H'(F))$, one has the similitude seesaw identity:
\[ \Hom_G(\Theta(\sigma), \tilde{\pi}) \simeq   \Hom_{H'}(\Theta(\tilde{\pi}), \sigma). \]
\end{prop}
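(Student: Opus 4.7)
The plan is to mimic the classical seesaw argument, showing that both Hom spaces are naturally isomorphic to the intermediate space
\[ \Hom_{G(F) \times H'(F)}(\Omega, \tilde{\pi} \otimes \sigma). \]
The hypothesis $G \times H' \subset \tilde{J}^{\sim}$ is exactly what is needed to make sense of this: $\Omega$ pulled back along $\iota: \tilde{J}^{\sim}(F) \to \mathcal{E}(F)$ is in particular a smooth representation of $G(F) \times H'(F)$, and $\tilde{\pi} \otimes \sigma$ makes sense as a $G(F) \times H'(F)$-module via the restriction of $\tilde{\pi}$ along $G(F) \hookrightarrow \tilde{G}(F)^+$.

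For the right-hand side, I would first unwind the definition of $\Theta(\tilde{\pi}) = (\Omega \otimes \tilde{\pi}^{\vee})_{G(F)}$ as a smooth $\tilde{H}(F)^+$-module. Noting that $H'(F) \subset H(F) \subset \tilde{H}(F)^+$, and that $G(F)$ commutes with $H'(F)$ inside $\tilde{J}^{\sim}(F)$ (since $G \subset G'$ is centralized by $H' \subset H$ in $\mathcal{E}$), the standard adjunction between coinvariants and Hom gives
\[ \Hom_{H'(F)}\bigl((\Omega \otimes \tilde{\pi}^{\vee})_{G(F)}, \sigma\bigr) \simeq \Hom_{G(F) \times H'(F)}(\Omega \otimes \tilde{\pi}^{\vee}, \sigma), \]
with $G(F)$ acting trivially on $\sigma$. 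Transferring $\tilde{\pi}^{\vee}$ to the other side (using smoothness of $\tilde{\pi}$ as a $G(F)$-module, which is the content of Lemma \ref{L:element}(i)) yields $\Hom_{G(F) \times H'(F)}(\Omega, \tilde{\pi} \otimes \sigma)$.

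For the left-hand side, the dual pair $(G', H')$ embeds injectively in $\mathcal{E}$, so $\Theta(\sigma) = (\Omega \otimes \sigma^{\vee})_{H'(F)}$ is a smooth $G'(F)$-module in the usual sense; restricting to $G(F) \subset G'(F)$, the same adjunction gives
\[ \Hom_{G(F)}\bigl((\Omega \otimes \sigma^{\vee})_{H'(F)}, \tilde{\pi}\bigr) \simeq \Hom_{G(F) \times H'(F)}(\Omega \otimes \sigma^{\vee}, \tilde{\pi}) \simeq \Hom_{G(F) \times H'(F)}(\Omega, \tilde{\pi} \otimes \sigma), \]
with $H'(F)$ acting trivially on $\tilde{\pi}$.

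The only subtle point is bookkeeping: one must check that the two actions of $G(F) \times H'(F)$ on $\Omega$ (one coming from viewing $G \times H' \subset \tilde{J}^{\sim}$ through $\tilde{G} \times \tilde{H}$, the other from $G \times H' \subset G' \times H \subset \mathcal{E}$) agree. This is immediate from the commutativity of the diagram
\[ \begin{CD} G \times H' @>>> \tilde{J}^{\sim} \\ @VVV @VV{\iota}V \\ G' \times H @>>> \mathcal{E}, \end{CD} \]
since $H'(F) \subset H(F)$ has trivial similitude factor. Once this is in place, comparing the two chains of isomorphisms gives the desired identity; this consistency check is the only genuine thing to verify, and it is essentially built into the hypothesis $G \times H' \subset \tilde{J}^{\sim}$.
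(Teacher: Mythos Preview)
Your proposal is correct and follows essentially the same approach as the paper: both sides are identified with the intermediate space $\Hom_{G(F) \times H'(F)}(\Omega, \tilde{\pi} \otimes \sigma)$ via the coinvariants--Hom adjunction, with Lemma~\ref{L:element}(i) invoked to justify moving $\tilde{\pi}^{\vee}$ across. Your additional remark about the compatibility of the two $G(F) \times H'(F)$-actions on $\Omega$ is a helpful clarification that the paper leaves implicit.
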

\vskip 5pt

\begin{proof}
For $\tilde{\pi} \in \Irr (\tilde{G}(F))$ and $\sigma \in \Irr( H'(F))$, we consider
\[  
\Hom_{G \times H'}(\Omega,  \tilde{\pi} \otimes \sigma). \]
On the one hand, this is isomorphic to
\[  \Hom_{G \times H'}(\Omega \otimes \sigma^{\vee}, \tilde{\pi}) = \Hom_G((\Omega \otimes \sigma^{\vee})_{H'}, \tilde{\pi}) \simeq  \Hom_G(\Theta(\sigma), \tilde{\pi}). \]
On the other hand, by Lemma \ref{L:element}(i), we also get
\[  \Hom_{G \times H'} (\Omega \otimes \tilde{\pi}^{\vee}, \sigma) = \Hom_{H'}((\Omega \otimes \tilde{\pi}^{\vee})_G, \sigma) = \Hom_{H'}(\Theta(\tilde{\pi}), \sigma). \]
Hence, one deduces that
\[ \Hom_G(\Theta(\sigma), \tilde{\pi}) \simeq   \Hom_{H'}(\Theta(\tilde{\pi}), \sigma), \]
as desired.
\end{proof}
\noindent In other words, there is no significant problem in extending the seesaw identity to the similitude setting, at least under the hypotheses  in the proposition.
\vskip 5pt

Let us give some examples of the above situation, using the exceptional dual pairs we discussed above. A first example is the seesaw diagram:
\vskip 5pt
  \[
 \xymatrix{
   \Aut(\mathbb{O}) = G_2 \ar@{-}[dr] \ar@{-}[d] &   \mathrm{iso}(J,N)  \ar@{-}[d] \\
 \Aut(V^{\square}, Q^{\square})^{der} = \SL_3    \ar@{-}[ur] &     \Aut(J) }
\]  
where 
\begin{itemize}
\item $J$ is a cubic norm structure over $F$ with norm form $N$;
\item $V$ is a 3-dimensional vector space over $F$, giving rise to a Jordan pair $(V^{\square}, Q^{\square})$;
\item  $\mathbb{O}$ is the octonion $F$-algebra  constructed as $\mathbb{O} = F^2 \times V^{\square}$ (as a structurable algebra \cite[\S 4]{dM}, a notion we did not introduce).
\end{itemize}
We have explained that the dual pair $\Aut(J) \times \Aut(\mathbb{O})$ does not have a similitude version, whereas the other pair $\SL(V) \times \mathrm{iso}(J,N)$ does. 
 \vskip 5pt
 
 As another example, we have:
  \[
 \xymatrix{
\Aut(E \oplus C^{\flat})     \ar@{-}[dr] \ar@{-}[d] &   \Aut_E(\mathbb{O}^{\flat}) = \Spin^E_8   \ar@{-}[d] \\
  \Aut_E(C^{\flat} )\ar@{-}[ur] & \Aut(\mathbb{O}) = G_2}
\]  
 where
 \begin{itemize}
 \item $\mathbb{O}$ is the octonion $F$-algebra giving rise to the twisted octonion algebra $\mathbb{O}^{\flat} = \mathbb{O} \otimes_F E$  over $E/F$;
 \item $C_1^{\flat}$ is a twisted composition algebra relative to $E/F$ giving rise to a a cubic norm structure (or Freudenthal-Jordan algebra) $J = E \oplus C_1^{\flat}$  via the Springer decomposition \cite[\S 38A, Pg. 522]{KMRT}.
 \end{itemize}
 As above, the dual pair $\Aut(\mathbb{O}) \times \Aut(J)$ does not have a similitude version whereas the pair $\Aut(\mathbb{O}^{\flat}) \times \Aut_E(C^{\flat})$ does. 
 \vskip 5pt
 
 Observe that in forming both these seesaw diagrams, the initial data consists of giving two algebraic structures on the bottom row of the diagram, which then induces the algebraic structures in the top row.  Moreover, observe that the two seesaw diagrams can be combined into a single one involving 3 dual pairs.
\vskip 10pt

\section{\bf Global Theta Correspondence}
In this section, we consider the global theta correspondence. Hence, let $k$ be a number field with adele ring $\A$. Suppose that we have a dual pair
\[  i: G \times  H \longrightarrow \mathcal{E} \]
as in the introduction, with $\Ker(i) = Z^{\nabla}$. 
\vskip 5pt

Let $\Omega$ be the global minimal representation of $\mathcal{E}$ and suppose one has an automorphic realization
\[  \theta:  \Omega \longrightarrow \mathcal{A}(\mathcal{E}). \] 
For an irreducible  cuspidal representation $\pi \subset \mathcal{A}_{cusp}(G)$ of $G$, one has the usual notion of global theta lifting. More precisely, for $\phi \in \Omega$ and $f \in \pi$, one sets
\[  \theta(\phi, f)(h) = \int_{[G]} \theta(\phi)(gh) \cdot \overline{f(g)} \, dg \]
for $h \in H(\A)$. Then the global theta lift of $\pi$ is the $H(\A)$-submodule
\[  \Theta(\pi) = \langle \theta(\phi, f): \phi \in \Omega, \, f \in \pi \rangle \subset \mathcal{A}(H). \]
Moreover the map $(\phi, f) \mapsto \theta(\phi, f)$ is a $G(\A)$-invariant and $H(\A)$-equivariant map
\[  \theta:  \Omega \otimes \overline{\pi} \longrightarrow \Theta(\pi). \]
\vskip 5pt

Now suppose we have chosen data (a) and (b) as in \S \ref{S:simi group} and hence have the similitude groups $\tilde{G}$ and $\tilde{H}$. 
On taking points, we have the groups
\[  
\tilde{G}(k)^+  \subset \tilde{G}(\A)^+ = \prod_v \tilde{G}(k_v)^+ \]
and likewise for $\tilde{H}(k)^+ \subset \tilde{H}(\A)^+$.
Moreover, by construction, one has a group homomorphism
\[  \iota:  \tilde{J}^{\sim} = (\tilde{G} \times \tilde{H})^{\sim} \longrightarrow \mathcal{E}.  \]
\vskip 5pt

Let $\tilde{\pi}$ be an irreducible  cuspidal representation of  $\tilde{G}(\A)^+ = \prod_v \tilde{G}(F_v)^+$ with a realization
\[  \tilde{\pi} \subset \mathcal{A}_{cusp}(\tilde{G}(k)^+ \backslash \tilde{G}(\A)^+). \]
 We would like to define its similitude global theta lifting to the space $\mathcal{A}(\tilde{H}^+)$ of automorphic forms on $\tilde{H}(\A)^+$. 
For $f \in \tilde{\pi}$, $\phi \in \Omega$ and $y \in \tilde{J}^{\sim}(\A)$, we set
\[  \theta(\phi, f)(y) = \int_{[G]}  \theta(\phi) ( g \cdot \iota( y)) \cdot \overline{f(g)} \, dg.\]
This function of $\tilde{J}^{\sim}(\A)$ descends to a function on $\tilde{H}(\A)^+$ which is left invariant under $\tilde{H}(k)^+$, thus giving an element of $\mathcal{A}(\tilde{H}^+)$.
 
\vskip 5pt

A more concrete way of defining the function $\theta(\phi,f)$ as a function on $\tilde{H}(\A)^+$ is as follows. Given $h \in \tilde{H}(\A)^+$, one can find an element $t(h) \in \tilde{G}(\A)^+$ such that
\[  \sim_{\tilde{G}}(t(h)) = \sim_{\tilde{H}}(h). \]   
Then one has  an element
\[  (t(h), h) \in \tilde{J}(\A)^{\sim} = (\tilde{G} \times \tilde{H})^{\sim}(\A). \] 
Then 
\[   \theta(\phi, f)(h) = \int_{[G]}  \theta(\phi) ( \iota(g t(h), h)) \cdot \overline{f(g)} \, dg.\]
Since this is merely an explication of the more natural definition of $\theta(\phi,f)$ given initially, the choice of the element $t(h)$ is immaterial.
\vskip 5pt

The span of the functions $\theta(\phi,f)$, as $\phi \in \Omega$ and $f \in \pi$ vary, is the global similitude theta lift $\Theta(\pi)$ of $\pi$. 
We note:
\vskip 5pt

\begin{prop}
Let $\pi \subset \mathcal{A}(\tilde{G}^+)$ be an irreducible  cuspidal representation of $\tilde{G}(\A)^+$. Assume that
\vskip 5pt
\begin{itemize}
\item $\Theta(\pi) \subset \mathcal{A}_2(\tilde{H}^+)$ is nonzero and contained in the space of square-integrable automorphic forms (with a fixed central character);
\item  the local Howe duality property holds at all places $v$ of $k$ for $G(k_v) \times H(k_v)$ or equivalently $\tilde{G}(k_v)^+ \times \tilde{H}(k_v)^+$;
\end{itemize}
Then 
\[  \Theta(\pi) \simeq \bigotimes_v \theta(\pi_v). \]
In particular, $\Theta(\pi)$  is irreducible and one has the compatibility of the local and global similitude theta correspondences.
\end{prop}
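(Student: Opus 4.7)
The approach is to factor the global theta map through the restricted tensor product of local big theta lifts, exploit the local Howe duality hypothesis (in its similitude form, via Proposition~\ref{P:Howe}) to pin down the unique irreducible quotient as $\bigotimes'_v \theta(\tilde\pi_v)$, and finally use the semisimplicity of the square-integrable spectrum to rule out higher multiplicities.

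The first step is to observe that the defining integral
\[ \theta(\phi,f)(y) = \int_{[G]} \theta(\phi)(g\cdot\iota(y))\,\overline{f(g)}\,dg \]
is invariant under the diagonal action of $G(\AA)$ on $\Omega \otimes \overline{\tilde\pi}$ by left-invariance of Haar measure on $[G]$. Since $G(\AA) = {\prod}'_v G(k_v)$ acts factor-by-factor on the restricted tensor product, taking coinvariants commutes with $\bigotimes'$, yielding a surjection of $\tilde H(\AA)^+$-modules
\[ {\bigotimes\nolimits}'_v \Theta(\tilde\pi_v) \;=\; (\Omega \otimes \overline{\tilde\pi})_{G(\AA)} \;\twoheadrightarrow\; \Theta(\tilde\pi). \]
The nonvanishing of $\Theta(\tilde\pi)$ then forces every $\Theta(\tilde\pi_v)$, and hence every $\theta(\tilde\pi_v)$, to be nonzero.

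Next, the local Howe duality hypothesis (plus Proposition~\ref{P:Howe}) gives that each $\Theta(\tilde\pi_v)$ is of finite length with unique irreducible quotient $\theta(\tilde\pi_v)$, and at almost every $v$ one already has $\Theta(\tilde\pi_v) = \theta(\tilde\pi_v)$ (the unramified constituent). A standard restricted-tensor-product argument then shows that $\bigotimes'_v \Theta(\tilde\pi_v)$ has $Q := \bigotimes'_v \theta(\tilde\pi_v)$ as its unique irreducible quotient, and that
\[ \dim \Hom_{\tilde H(\AA)^+}\!\left( {\bigotimes\nolimits}'_v \Theta(\tilde\pi_v),\, Q \right) \;=\; \prod_v \dim \Hom_{\tilde H(k_v)^+}\!\left( \Theta(\tilde\pi_v),\, \theta(\tilde\pi_v) \right) \;=\; 1. \]
Now $\Theta(\tilde\pi) \subset \mathcal{A}_2(\tilde H^+)$ is semisimple by the square-integrability hypothesis. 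Each irreducible summand of $\Theta(\tilde\pi)$, being an irreducible quotient of $\bigotimes'_v \Theta(\tilde\pi_v)$, must be isomorphic to $Q$, so $\Theta(\tilde\pi) \simeq mQ$ for some $m \geq 1$. But the surjection $\bigotimes'_v \Theta(\tilde\pi_v) \twoheadrightarrow mQ$ corresponds to an element of $\Hom(\bigotimes'_v \Theta(\tilde\pi_v),\,Q)^{\oplus m} \cong \CC^m$, so it is of the form $(c_1\phi,\ldots,c_m\phi)$ for the essentially unique quotient map $\phi$; its image in $Q^{\oplus m}$ is the one-dimensional ``diagonal'' copy of $Q$ cut out by $(c_1,\ldots,c_m)$, which can exhaust the target only if $m=1$. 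Hence $\Theta(\tilde\pi)\simeq Q$.

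The main technical obstacle I anticipate is the second step: verifying that coinvariants under $G(\AA)$ genuinely commute with the restricted tensor product in the smooth (possibly non-admissible) setting, and that the unique-irreducible-quotient property together with the one-dimensional Hom computation transfer from finite to restricted tensor products. Once those standard but not entirely trivial facts are in place, the multiplicity-one conclusion follows essentially formally from the semisimplicity of $\mathcal{A}_2(\tilde H^+)$, and the local-global compatibility $\Theta(\tilde\pi) \simeq \bigotimes'_v \theta(\tilde\pi_v)$ falls out with no further input.
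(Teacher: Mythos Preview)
The paper states this proposition without proof (it appears at the end of \S 8 and is immediately followed by the appendix), so there is no ``paper's own proof'' to compare against. Your argument is the standard one for results of this type and is essentially correct: factor the global theta map through $G(\AA)$-coinvariants to get a surjection from $\bigotimes'_v \Theta(\tilde\pi_v)$, use local Howe duality (transferred to the similitude setting via Proposition~\ref{P:Howe}) to identify the unique irreducible quotient as $\bigotimes'_v \theta(\tilde\pi_v)$, and then use semisimplicity of $\mathcal{A}_2$ together with the one-dimensionality of the relevant Hom space to force multiplicity one.

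The technical points you flag are the right ones to worry about, though they are standard. One small remark: you do not actually need the equality $\Theta(\tilde\pi_v) = \theta(\tilde\pi_v)$ at almost all places; it suffices that the spherical vector in $\Theta(\tilde\pi_v)$ has nonzero image in $\theta(\tilde\pi_v)$, which is automatic since the spherical vector generates $\Theta(\tilde\pi_v)$ and $\theta(\tilde\pi_v)$ is a nonzero quotient. This is enough both to make the restricted tensor product $\bigotimes'_v \theta(\tilde\pi_v)$ well-defined and to ensure the product of local quotient maps is nonzero. Your multiplicity-one endgame (any surjection onto $Q^{\oplus m}$ has image a single diagonal copy of $Q$) is a clean way to finish.
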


\vskip 10pt
\section{\bf Appendix: Clifford theory for $p$-adic groups}  
Let $G$ be a locally compact group and $N$ be a normal open subgroup of $G$ of finite index. Let $A=G/N$, and $m=|A|$.  
We assume that the topology on $N$ (and hence on $G$) is defined by a sequence of 
open compact groups. We assume that $G/K$ is countable for one, and hence all of those open compact groups. 
This condition assures that Schur's lemma holds for smooth irreducible representations of $G$ and $N$. The following elementary result is 
based on \cite[\S 2.7]{BH}.
\vskip 5pt

\begin{prop} \label{P:A1}
Let $(\pi, V)$ be a smooth finite length representation of $G$.  Then 
\begin{enumerate} 
\item $V$ is a finite length $N$-module. 
\item $V$ is semisimple $G$-module if and only if it is a semisimple $N$-module.  
\item The $N$-socle of $V$ is equal to the $G$-socle of $V$. 
\end{enumerate} 
\end{prop}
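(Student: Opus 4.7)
The plan is to reduce all three parts to one Clifford-theoretic lemma about the restriction of an irreducible to $N$, and then to exploit the finite-index hypothesis $m = [G:N]$ by averaging over the finite group $A = G/N$. The central lemma I would first establish is: if $W$ is an irreducible smooth $G$-module, then $W|_N$ is semisimple of finite $N$-length at most $m$. To prove it, pick a nonzero irreducible $N$-submodule $\tau \subseteq W$. Then the sum $\sum_{g \in G/N} g\cdot\tau$ (over coset representatives) is a nonzero $G$-submodule, hence equal to $W$ by irreducibility; the standard Clifford/Mackey computation then refines this sum to a direct sum of $G$-conjugates of $\tau$, indexed by $G/\mathrm{Stab}_G([\tau])$, each simple as an $N$-module.

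Granted the lemma, part (1) is an immediate filtration argument: take a $G$-composition series $0 = V_0 \subset V_1 \subset \cdots \subset V_n = V$ and apply the lemma to each simple subquotient to conclude that $V$ has $N$-length at most $mn$.

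For part (2), the implication $G$-semisimple $\Rightarrow$ $N$-semisimple follows at once from the lemma applied to each irreducible $G$-summand of $V$. For the converse, suppose $V|_N$ is semisimple and let $U \subseteq V$ be a $G$-submodule; choose an $N$-equivariant projection $p: V \to U$ along some $N$-stable complement, and form
\[
\tilde p \;=\; \frac{1}{m} \sum_{g \in G/N} g \circ p \circ g^{-1}.
\]
The $N$-equivariance of $p$ together with the $G$-stability of $U$ makes each summand depend only on the coset $gN$, and a direct computation shows $\tilde p$ is a $G$-equivariant projection of $V$ onto $U$. Hence $\ker \tilde p$ is a $G$-stable complement of $U$, proving $V$ is $G$-semisimple.

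Part (3) follows formally: the $N$-socle $S_N$ of $V$ is $G$-stable because $G$ permutes the simple $N$-submodules of $V$ by conjugation, and it is tautologically $N$-semisimple, hence $G$-semisimple by part (2), giving $S_N \subseteq S_G$; conversely, $S_G$ is $G$-semisimple, so $N$-semisimple by the lemma, giving $S_G \subseteq S_N$. The main obstacle throughout is the initial production of the simple $N$-submodule $\tau \subseteq W$ used in the central lemma: this is the one step that genuinely uses the topological hypotheses of the appendix (a countable base of compact open subgroups, and the resulting Schur's lemma), and it is needed to extract a simple $N$-constituent from a cyclic, and therefore countable-dimensional, smooth $N$-submodule of $W$. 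Every other ingredient is the formal averaging argument over the finite group $A$, and requires only that $m$ be invertible in the coefficient field.
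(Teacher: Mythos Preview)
Your averaging argument for the converse direction of (2) and your deduction of (3) from (2) match the paper's proof exactly. The gap is in your central lemma, precisely at the step you yourself flag as the ``main obstacle'': the existence of an irreducible $N$-submodule $\tau \subseteq W$. Countable dimensionality together with Schur's lemma does not produce a simple submodule of a cyclic smooth $N$-module; a finitely generated smooth module can perfectly well have no simple submodule at all. What finite generation \emph{does} give you, via Zorn's lemma applied to the poset of proper submodules, is a simple \emph{quotient}. Your sentence ``extract a simple $N$-constituent'' blurs exactly this distinction: a constituent (subquotient) is easy to find, a submodule is not, and only a genuine submodule lets you form $\sum_g g\cdot\tau$ inside $W$.

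This is exactly how the paper proceeds, and the maneuver is worth internalizing. Since $W$ is $G$-irreducible and $[G:N]<\infty$, $W$ is finitely generated over $N$ and hence admits an irreducible $N$-quotient $U$. Frobenius reciprocity converts the nonzero $N$-map $W \twoheadrightarrow U$ into a nonzero $G$-map $W \to \Ind_N^G U$, which is injective because $W$ is $G$-irreducible. But $(\Ind_N^G U)|_N \simeq \bigoplus_{g \in G/N} U^g$ is semisimple of $N$-length $m$, and any submodule inherits both properties. This single embedding simultaneously yields (1) and the forward direction of (2) for irreducible $W$; your filtration argument then gives (1) in general, and your arguments for the remaining parts go through unchanged. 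The moral: when a simple submodule is not available, find a simple quotient and let Frobenius reciprocity flip the arrow for you.
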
 
\begin{proof}  (1) We can assume that $V$ is irreducible. Since $V$ is finitely generated over $N$, it admits an irreducible $N$-quotient $U$.  Then, by Frobenius reciprocity, 
 $V$ is a submodule of $\Ind_N^G U$. The latter is a semi-simple $N$-module of finite length.  Thus $V$ is not only finite length but also a semi-simple $N$-module. 
  This completes (1) and gives one direction of (2).  Now assume that $V$ is $N$-semisimple. 
Let $W\subset V$ be a $G$-submodule. Since $V$ is $N$-semisimple, there exists an $N$-invariant projection $P$ of $V$ onto $W$. Then 
\[ 
\frac{1}{m}\sum_{g\in G/N} \pi(g) P\pi(g)^{-1} 
\] 
is a $G$-invariant projection of $V$ onto $W$.  Let $U\subset V$ be the kernel of this projection. Then $V=W\oplus U$ and $V$ is $G$-semisimple.  (3) The $G$-socle of $V$ is $H$-semisimple by 
(2) hence it is contained in the $H$-socle. On the other hand, the $H$-socle is $G$-stable, and $G$-semisimple by (2). Hence it is contained in the $G$-socle. 
\end{proof} 

Next we want to describe smooth irreducible representations of $G$ in terms of those of $N$. 
Let $V$ be an irreducible representation of $G$ and $U$ an irreducible 
$N$-quotient.  By Frobenius reciprocity, $V$ is an irreducible submodule of $\Ind_N^G(U)$. Thus we need to analyze the induction from $N$ to $G$. 
For every $g\in G$, let $U^g$ be the $g$-conjugate of $(\pi, U)$, that is,  the representation of $N$ on $U$ where every $n\in N$ acts by  $\pi(gng^{-1})$. 
Let $G_U$ be the stabilizer of $U$ in $G$, that is, the subgroup of $G$ consisting of all $g\in G$ such that $U$ is isomorphic to $U^g$, as $N$-modules.  

\begin{lemma}  Let $V$ be an irreducible representation of $G_U$ containing $U$. Then 
\[ 
\Ind_{G_U}^G V 
\] 
is an irreducible representation of $G$.
\end{lemma}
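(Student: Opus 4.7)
The plan is to carry out the standard Mackey--Clifford argument, using Proposition \ref{P:A1} to handle the semisimplicity issues. Set $I = \Ind_{G_U}^G V$, and choose representatives $g_1 = 1, g_2, \ldots, g_r$ of the left cosets of $G_U$ in $G$ (a finite set since $N \subset G_U$ has finite index in $G$). Realize $I$ as the space of functions $f \colon G \to V$ satisfying $f(g_0 g) = g_0 f(g)$ for $g_0 \in G_U$, with $G$ acting by right translation. As a vector space, $I = \bigoplus_{i=1}^r V_i$, where $V_i$ consists of functions supported on $G_U g_i$; each $V_i$ is isomorphic to $V$ via evaluation at $g_i$, and $V_1 = V$ recovers the canonical embedding $V \hookrightarrow I$ given by Frobenius reciprocity.

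First I would analyze $I$ as an $N$-module. Normality of $N$ yields $N \subset G_U$, and a direct computation shows that on $V_i \cong V$, the element $n \in N$ acts as $g_i n g_i^{-1}$ would act on $V$; thus $V_i|_N$ is the $g_i$-conjugate of $V|_N$. Since $V$ is irreducible as a $G_U$-module, Proposition \ref{P:A1}(2) implies that $V|_N$ is semisimple, and because $U \hookrightarrow V|_N$ while $G_U$ preserves the isomorphism class of $U$, every irreducible constituent of $V|_N$ is isomorphic to $U$. Hence $V|_N$ is $U$-isotypic, and $V_i|_N$ is $U^{g_i}$-isotypic. By the very definition of $G_U$, the classes $U^{g_1}, \ldots, U^{g_r}$ are pairwise non-isomorphic, so the $V_i$ are precisely the isotypic components of the semisimple $N$-module $I|_N$.

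To finish, let $W \subset I$ be a nonzero $G$-submodule. Since $W|_N$ is a submodule of the semisimple $I|_N$, it splits as $W = \bigoplus_i (W \cap V_i)$, and at least one summand $W \cap V_i$ is nonzero. Because $g_i$ acts on $I$ and carries $V_i$ isomorphically onto $V_1 = V$, the subspace $W \cap V$ is nonzero. But $W \cap V$ is a $G_U$-submodule of the $G_U$-irreducible $V$, so $V \subset W$; translating by each $g_i$ then gives $W \supset \bigoplus_i V_i = I$, and hence $W = I$.

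There is no deep conceptual obstacle. The step requiring the most care is the claim that $V|_N$ is $U$-isotypic rather than merely containing $U$, which combines Proposition \ref{P:A1}(2) with the fact that $G_U$ stabilizes the class of $U$ while permuting the irreducible $N$-constituents of $V$. Once the isotypic types $U^{g_i}$ of the summands $V_i$ are seen to be distinct, the irreducibility of $\Ind_{G_U}^G V$ reduces to the $G_U$-irreducibility of $V$.
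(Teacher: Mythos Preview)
Your proof is correct and follows essentially the same Mackey--Clifford route as the paper: both decompose $\Ind_{G_U}^G V$ into its $N$-isotypic pieces $V_i$ indexed by the cosets $G_U g_i$ (equivalently, by the distinct conjugates $U^{g_i}$), observe that any $G$-submodule $W$ decomposes along these pieces, and then use the $G_U$-irreducibility of $V$ (the paper phrases it as $G_{U_i}$-irreducibility of each $V_i$, you translate back to $V_1$) to force $W = 0$ or $W = I$. The only cosmetic difference is that you work in the explicit function model and move everything to the identity coset, whereas the paper argues symmetrically at each $i$.
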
 
\begin{proof}  Observe that the restriction of $V$ to $N$ is a multiple of $U$. Let $U_1, \ldots , U_l$ be all non-isomorphic $G$-conjugates of $U$.  We have a decomposition, as an $N$-module, 
\[ 
\Ind_{G_U}^G V  = \bigoplus_{i=1}^l V_i  
\] 
where $V_i$ is a multiple of $U_i$.  Observe that $G$ permutes these summands, and that each $V_i$ is an irreducible $G_{U_i}$-module.  Furthermore, for each $i$, 
\[ 
V_i'= \bigoplus_{j\neq i} V_j  
\] 
is a $G_{U_i}$-module.  Now let $W$ be a $G$-submodule of $\Ind_{G_U}^G V$. Fix an $i$. It is easy to see that any $G_i$-submodule of $\Ind_{G_U}^G V$, in particular $W$, 
 either contains $V_i$ or is contained in $V_i'$.  Since $W$ is a $G$-module, it follows that $W=\Ind_{G_U}^G V$ or $0$.  
\end{proof}

Next we need to understand irreducible representation of $G_U$ containing $U$. To simplify notation assume that $G=G_U$.  
Then, for every $g\in G$, there exists $A_g : U \rightarrow U$ that intertwines $U$ and $U^g$.  By Schur's lemma $A_g$  
is unique up to an element in $\mathbb C^{\times}$.  Thus $U$ is naturally a module for a  central extension 
\[ 
1\rightarrow \mathbb C^{\times} \rightarrow \tilde G \rightarrow G \rightarrow 1. 
\] 
This extension splits over $N$, indeed, we have a canonical choice $A_n=\pi(n)$ for all $n\in N$. After taking the quotient by $N$ we get a central extension 
\[ 
1\rightarrow \mathbb C^{\times} \rightarrow \tilde A \rightarrow A \rightarrow 1. 
\] 
Next, we have 
\[ 
U\otimes \Ind_{\mathbb C^{\times}}^{\tilde A} (\epsilon)  \simeq \Ind_N^G(U) 
\] 
where $U$ is viewed as $\tilde G$-module and $\epsilon : \mathbb C \rightarrow \mathbb C$ is the inverse character.  This isomorphism is realized by 
\[ 
u \otimes f \mapsto (g \mapsto  f(g)\pi(g)(u)). 
\] 
Thus in order to decompose $\Ind_N^G(U)$ it suffices to decompose $ \Ind_{\mathbb C^{\times}}^{\tilde A} (\epsilon)$. 
\smallskip 

Claim: The extension $\tilde A$ is defined by a co-cycle with values in $\mu_{m}$. To see this, define a section $s : A\rightarrow \tilde A$ such that $s(a)$,  for every $a\in A$, acts on 
$ \Ind_{\mathbb C^{\times}}^{\tilde A} (\epsilon)$ as a linear transformation of determinant 1.  
Then the extension $\tilde A$ is determined by the co-cycle $c(a,b)$ defined by 
\[ 
s(a) s(b) =c(a,b) s(ab). 
\]
After taking determinant of both sides, we get $c(a,b)^m=1$, as claimed. 

\smallskip 
Using well known facts from representations of finite groups, we can now write  
\[ 
\Ind_{\mathbb C^{\times}}^{\tilde A} (\epsilon) \simeq \oplus_{E}  \dim(E)\cdot  E
\]   
where the sum runs over all irreducible representations $E$ of $\tilde A$ such that $\mathbb C^{\times} \subset \tilde A$ acts by $\epsilon$. Call such representations genuine. 

\vskip 5pt

\begin{lemma} Let $U$ be an irreducible representation of $N$ such that $G_U=G$. With notation as above 
\[ 
\Ind_N^G U = \bigoplus_{E}  \dim(E) \cdot U\otimes E 
\] 
where the sum is over all irreducible genuine representations $E$ of $\tilde A$.  
Representations $U\otimes E$ are irreducible and mutually non-isomorphic. 
\end{lemma}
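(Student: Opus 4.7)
The displayed decomposition is essentially formal: substitute the previously established isomorphism $\Ind_{\C^{\times}}^{\tilde A}(\epsilon) \simeq \bigoplus_E \dim(E) \cdot E$ into $U \otimes \Ind_{\C^{\times}}^{\tilde A}(\epsilon) \simeq \Ind_N^G U$ and distribute the tensor product across the direct sum. All the real content lies in the second sentence, namely that each $U \otimes E$ is $G$-irreducible and that distinct irreducible genuine $\tilde A$-modules $E, E'$ give non-isomorphic $G$-modules $U \otimes E$ and $U \otimes E'$. My plan is to establish both assertions simultaneously by computing $\Hom_G(U \otimes E, U \otimes E')$ in two stages and invoking Schur's lemma.

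Since the composition $N \hookrightarrow \tilde G \twoheadrightarrow \tilde A$ is trivial on $N$, the restriction of $U \otimes E$ to $N$ is a direct sum of $\dim E$ copies of the irreducible $U$. Schur's lemma applied to $U$ therefore gives
\[
\Hom_N(U \otimes E, U \otimes E') = \Hom_{\C}(E, E'),
\]
with every $N$-intertwiner of the form $\mathrm{id}_U \otimes f$. To promote such an intertwiner to a $G$-intertwiner (equivalently, a $\tilde G$-intertwiner, since the $\C^{\times}$-action on $U$ and the $\epsilon$-twist on $E$ cancel on the tensor product by the very choice of $\epsilon$ as the inverse character of $U$), one requires commutation with lifts $\tilde g_a \in \tilde G$ of coset representatives of $N$ in $G$. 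Any such $\tilde g_a$ acts on $U \otimes E$ as a pure tensor $\pi(\tilde g_a) \otimes \beta(a)$, where $\beta(a) \in \tilde A$ is the image of $\tilde g_a$ under $\tilde G \twoheadrightarrow \tilde A$; the commutation condition on $\mathrm{id}_U \otimes f$ then reduces precisely to $f \in \Hom_{\tilde A}(E, E')$. Consequently
\[
\Hom_G(U \otimes E, U \otimes E') = \Hom_{\tilde A}(E, E'),
\]
which by Schur applied to the irreducible $\tilde A$-modules $E, E'$ is $\C$ if $E \simeq E'$ and $0$ otherwise.

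It remains to extract the two conclusions. Pairwise non-isomorphism for $E \not\simeq E'$ is immediate. For irreducibility, note that $U \otimes E$ is $N$-semisimple as a sum of copies of $U$, hence $G$-semisimple by Proposition~\ref{P:A1}(2); combined with $\End_G(U \otimes E) = \C$, any nontrivial $G$-decomposition would produce a non-scalar idempotent, a contradiction, so $U \otimes E$ is irreducible. The one step I expect to require care is the passage from $\Hom_N$ to $\Hom_G$: one must carefully track how the central $\C^{\times} \subset \tilde G$ acts on each tensor factor and verify that the twist built into the definition of $\epsilon$ makes the action descend to $G$ and reduces the remaining intertwining condition to $\tilde A$-equivariance on $E, E'$. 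Everything else is bookkeeping already contained in the preceding paragraphs of the appendix.
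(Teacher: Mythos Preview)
Your proposal is correct and follows essentially the same route as the paper: compute $\Hom_N(U\otimes E_1,U\otimes E_2)\simeq\Hom_{\C}(E_1,E_2)$ by Schur's lemma for $U$, then pass to $\Hom_G\simeq\Hom_{\tilde A}$ and invoke Schur for the $\tilde A$-modules. The paper's proof is terser (it simply writes ``The lemma follows'' after the second isomorphism), whereas you spell out the irreducibility step explicitly via $N$-semisimplicity plus Proposition~\ref{P:A1}(2) and $\End_G(U\otimes E)=\C$; this extra care is welcome but not a different argument.
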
 
\begin{proof} It remains to prove the last sentence. Let $E_1$ and $E_2$ be any two genuine $\tilde A$-modules. Since $U$ is irreducible $N$-module,  
\[ 
\Hom_N( U\otimes E_1 , U\otimes E_2) \simeq \Hom_{\mathbb C}(E_1, E_2). 
\] 
Thus 
\[ 
\Hom_G( U\otimes E_1 , U\otimes E_2) \simeq \Hom_{\tilde A}(E_1, E_2). 
\] 
The lemma follows.   
\end{proof} 

We now briefly discuss the lemma when $A$ is abelian. Then $\tilde A$ is a two-step nilpotent group. 
The commutator of elements in $\tilde A$ defines a skew-linear form on $A\times A$ 
\[ 
\langle a, b \rangle = [s(a),s(b)]
\] 
Since the order of $A$ is $m$, the skew form, and therefore the commutator, takes values in $\mu_m$.
The dimension of any genuine irreducible representation $E$ is equal to $\sqrt{|\bar A|}$ where $\bar A$ is the quotient of 
$A$ by the kernel of the skew form $\langle a, b \rangle$. Furthermore, any two irreducible representations of $G$ containing $U$ are  $A$-character twists one of another. Moreover, 
two character twists are isomorphic if and only if the characters coincide on the kernel of the skew form. 

\smallskip 
Finally we combine the two lemmas to obtain the general case. 
\begin{prop} \label{P:A2}
Let $U$ be an irreducible $N$-module, and $G_U$ its stabilizer in $G$. Let $A_U=G_U/N$. Let $\tilde A_U$ be the central extension of $A_U$ arising from the projective 
action of $G_U$ on $U$.  Then 
\[ 
\Ind_N^G U = \bigoplus_{E}  \dim(E) \cdot \Ind_{G_U}^G (U\otimes E) 
\]
where the sum is over all irreducible genuine representations $E$ of $\tilde A_U$.  This is a decomposition of $\Ind_N^G U$ into isotypic summands. 
\end{prop}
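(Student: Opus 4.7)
The plan is to combine the two preceding lemmas via induction in stages. First I would write
\[ \mathrm{Ind}_N^G U \;=\; \mathrm{Ind}_{G_U}^G\!\left( \mathrm{Ind}_N^{G_U} U \right), \]
and apply the preceding lemma with $(G_U, N)$ in place of $(G,N)$; note that by construction $G_U$ is precisely the stabilizer of $U$ in $G$, so the hypothesis $G_U = (G_U)_U$ is automatic. This yields
\[ \mathrm{Ind}_N^{G_U} U \;\simeq\; \bigoplus_{E} \dim(E)\cdot (U \otimes E), \]
where $E$ ranges over the irreducible genuine representations of $\tilde A_U$. Since induction is exact and commutes with direct sums, this gives
\[ \mathrm{Ind}_N^G U \;\simeq\; \bigoplus_{E} \dim(E)\cdot \mathrm{Ind}_{G_U}^G (U \otimes E). \]

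Next, I would appeal to the first lemma to ensure that each summand $\mathrm{Ind}_{G_U}^G(U \otimes E)$ is an irreducible $G$-module. Indeed $U \otimes E$ is an irreducible $G_U$-module (by the preceding lemma) containing $U$ in its restriction to $N$, which is exactly the hypothesis of that lemma.

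It remains to show that $\mathrm{Ind}_{G_U}^G(U \otimes E_1) \not\simeq \mathrm{Ind}_{G_U}^G(U \otimes E_2)$ whenever $E_1 \not\simeq E_2$. By Frobenius reciprocity,
\[ \mathrm{Hom}_G\!\bigl( \mathrm{Ind}_{G_U}^G(U \otimes E_1),\, \mathrm{Ind}_{G_U}^G(U \otimes E_2) \bigr) \;\simeq\; \mathrm{Hom}_{G_U}\!\bigl( U \otimes E_1,\, \mathrm{Ind}_{G_U}^G(U \otimes E_2)\bigr|_{G_U} \bigr). \]
Applying Mackey's decomposition to the restriction, the right-hand side becomes a sum indexed by double cosets in $G_U \backslash G / G_U$. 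For $g \notin G_U$, the corresponding summand has $N$-isotype supported on $G$-conjugates $U^g \not\simeq U$, and hence contributes nothing when mapped into or out of $U \otimes E_1$ whose $N$-isotype is $U$. Only the trivial double coset survives, yielding $\mathrm{Hom}_{G_U}(U \otimes E_1, U \otimes E_2)$, which vanishes unless $E_1 \simeq E_2$ by the previous lemma. Thus the summands are pairwise non-isomorphic, and the given expression exhibits $\mathrm{Ind}_N^G U$ as its decomposition into isotypic components.

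The step likely to require the most care is the Mackey bookkeeping in the final paragraph, specifically ruling out cross-contributions from non-trivial double cosets by a clean appeal to the $N$-isotypic structure; everything else is a direct application of the two preceding lemmas.
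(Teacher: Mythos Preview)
Your proof is correct and follows exactly the approach the paper indicates: the paper states just before the proposition that one ``combine[s] the two lemmas to obtain the general case,'' and your argument via induction in stages, applying the second lemma at the $G_U$-level and the first lemma for irreducibility of each induced summand, is precisely that combination. The Mackey bookkeeping you flag is indeed the only point needing extra care beyond the two lemmas, and your treatment of it (reducing to the trivial double coset by comparing $N$-isotypes) is clean and correct.
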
  
We remark that a study of the type of problems considered in this appendix can be found in the paper \cite{T} of M. Tadi\'c, where it was shown that the restriction of an irreducible representation of $\GL_n(F)$ to $\SL_n(F)$ is multiplicity-free.

\vskip 15pt

\noindent{\bf Acknowledgments:}  This paper is dedicated to Marko Tadi\'c on the occasion of his 70th birthday.
W.T.G. is partially supported by a Singapore government  MOE Tier 1 grant 
R-146-000-320-114 and the Tan Chin Tuan Centennial Professorship at NUS. G. Savin is partially supported by 
 a  National Science Foundation grant DMS-1901745 and by a Simons Foundation gift 946504. The authors would like to thank Skip Garibaldi for pointing them to the notion of Jordan pairs developed in Loos' book \cite{Loos}.

\vskip 15pt

\end{document}